\documentclass{iopart}

\usepackage{iopams}
 \expandafter\let\csname equation*\endcsname\relax
  \expandafter\let\csname endequation*\endcsname\relax
\usepackage{amsmath}
\usepackage{amsthm}
\newtheorem{theorem}{Theorem}
\newtheorem{lemma}{Lemma}

\begin{document}
\title{Multiple Meixner polynomials and non-Hermitian oscillator Hamiltonians}
\author{F Ndayiragije$^1$ and W Van Assche$^2$} 
\address{$^1$ D\'epartement de Math\'ematiques, Universit\'e du Burundi, Campus Mutanga, P.O. BOX 2700, Bujumbura, Burundi}
\address{$^2$ Department of Mathematics, KU Leuven, Celestijnenlaan 200B box 2400, BE-3001 Leuven, Belgium}
\ead{ndayiragijefrancois@yahoo.fr}
\ead{walter@wis.kuleuven.be}

\begin{abstract}
Multiple Meixner polynomials are polynomials in one variable which satisfy orthogonality relations
with respect to $r>1$ different negative binomial distributions (Pascal distributions).
There are two kinds of multiple Meixner polynomials, depending on the selection 
of the parameters in the negative binomial distribution. We recall their definition and some formulas
and give generating functions and explicit expressions for the coefficients in the nearest neighbor recurrence relation. Following a recent construction of Miki, Tsujimoto, Vinet and Zhedanov (for multiple Meixner polynomials of the first kind), we construct $r>1$ non-Hermitian oscillator Hamiltonians in 
$r$ dimensions which are
simultaneously diagonalizable and for which the common eigenstates are expressed in terms of multiple Meixner polynomials of the second kind.   
\end{abstract}

\pacs{02.30.Gp, 03.65.Ge}
\ams{42C05, 33C47, 81R05}
\submitto{J. Phys. A: Math. Theor.}

\section{Introduction}
Meixner polynomials $M_n(x;\beta, c)$, $n= 0,1,2,\ldots$ are orthogonal
polynomials for the negative binomial distribution (Pascal
distribution), i.e.,
\[  \sum_{k=0}^\infty M_n(k;\beta,c)M_m(k;\beta,c) \frac {(\beta
)_k}{k!} c^k =0, \qquad n\neq m, \] 
where $\beta > 0$ and $0<c<1$.
These polynomials are orthogonal on the positive integers.
In Chihara \cite[Chap.VI~, \S 3]{Chihara} they are called Meixner polynomials of the first kind,
but nowadays they are more commonly known as Meixner polynomials (see, e.g., \cite[\S 6.1]{Ismail} and \cite[\S 9.10]{Koekoek}) 
and the Meixner polynomials of the second kind in \cite[Chap.VI~, \S 3]{Chihara} are now known as Meixner-Pollaczek polynomials 
(see, e.g., \cite[\S 5.9]{Ismail} and \cite[\S 9.7]{Koekoek}).
The Meixner polynomials are explicitly given by
\[    \tilde{M}_n(x;\beta,c) = {}_2F_1\left(  \left.\begin{array}{c} -n, -x \\ \beta \end{array}\right|  1 - \frac1{c} \right) 
         = \sum_{k=0}^n \binom{n}{k} \frac{(-x)_k}{(\beta)_k} \left(\frac{1-c}{c} \right)^k \]
so that the leading coefficient is $(1-1/c)^n/(\beta)_n$. The monic polynomials are given by 
$M_n(x;\beta,c) =(\beta)_nc^n/(c-1)^n \tilde{M}_n(x;\beta,c)$.

We will investigate multiple Meixner  polynomials, which are
polynomials of one variable with orthogonality properties with
respect to more than one negative binomial distribution. 
These multiple Meixner polynomials will be used in Section \ref{oscillator} to construct non-Hermitian oscillator Hamiltonians
in $r$ dimensions which are simultaneously diagonalizable and for which the eigenstates are expressed in terms of multiple Meixner polynomials.
This construction was first done in \cite{MVZ} for multiple Charlier polynomials and in \cite{MTVZ} for multiple Meixner polynomials
of the first kind, but there is a gap in their construction which we will fix in Section \ref{MM1}. We extend this construction to multiple
Meixner polynomials of the second kind in Section \ref{MM2}. The multiple Meixner polynomials hence give a mathematical approach to a physical 
phenomenon involving a chain of Hamiltonians.
 
There are two kinds of multiple Meixner polynomials, depending on how one chooses the negative binomial
distributions (\cite{ACV}, \cite {Ismail}). 
Let $\vec n = (n_1,\ldots,n_r)$ 
be a multi-index of size $|\vec n| = n_1+\cdots+n_r$, then the multiple Meixner polynomials of the first kind $M_{\vec{n}}^{(1)}$
are the monic polynomials of degree $|\vec n |$ for which 
\[    \sum_{k=0}^\infty M_{\vec{n}}^{(1)}(k) k^\ell \frac{(\beta)_k(c_j)^k}{k!} = 0, \qquad \ell=0,1,\ldots,n_j-1, \ j=1\ldots,r, \]
where $\beta > 0$ and $0 < c_i \neq c_j < 1$ whenever $i \neq j$.
The multiple Meixner polynomials of the second kind $M_{\vec{n}}^{(2)}$ are the monic polynomials of degree $|\vec{n}|$
for which 
\[    \sum_{k=0}^\infty M_{\vec{n}}^{(2)}(k) k^\ell \frac{(\beta_j)_kc^k}{k!} = 0, \qquad \ell=0,1,\ldots,n_j-1, \ j=1\ldots,r, \]
where $0<c<1$ and $\beta_j >0$ for all $j=1,\ldots,r$, and $\beta_i-\beta_j \notin \mathbb{Z}$ when $i \neq j$.
For $r=1$ we retrieve the Meixner polynomials. 
An explicit formula for the multiple Meixner
polynomials of the first kind can be obtained using the Rodrigues
formula \cite{ACV, Ismail},
\begin{equation}  \label{Rodrigues 1}
     M_{\vec n}^{(1)}(x) =(\beta)_{|\vec n|}\prod_{j=1}^r  \left( \frac{c_j}{c_j-1}
    \right)^{n_j} \frac {\Gamma(\beta)\Gamma (x+1)}{\Gamma (\beta +
    x)} \left( \prod_{j=1}^r  c_j^{-x}\nabla^{n_j}c_j^x \right)
     \frac {\Gamma (|\vec n|+\beta +x)}{\Gamma (|\vec n|+ \beta )\Gamma (x+1)}
  \end{equation}
where $\nabla$ is the backward difference operator, given by $\nabla f(x) = f(x)-f(x-1)$.
For this backward difference operator $\nabla$ we have the
property
\begin{equation} \label{backward}
   \nabla^n f(x) = \sum_{k=0}^n \binom{n}{k} (-1)^k f(x-k) .
\end{equation}
Combining the Rodrigues formula \eref{Rodrigues 1} and
\eref{backward} we have 
\begin{multline*}
M_{\vec n}^{(1)}(x) = \sum_{k_1=0}^{n_1} \cdots \sum_{k_r=0}^{n_r}
   \binom{n_1}{k_1} \cdots \binom{n_r}{k_r} \frac {c_1^{n_1-k_1} \ldots c_r^{n_r- k_r}}{(c_1-1)^{n_1} \ldots
   (c_r-1)^{n_r}} (-1)^{k_1+ \cdots +k_r} \\
    \times  \frac {\Gamma (x+1)}{\Gamma (x-|\vec k|+1)} \quad  \frac{\Gamma (\beta + |\vec n|+x- |\vec k|)}{\Gamma (\beta +x)}
\end{multline*}
which gives an explicit expression for the multiple Meixner
polynomials of the first kind
\begin{equation} \label{explicit 1}
M_{\vec n}^{(1)}(x)= \sum_{k_1=0}^{n_1} \cdots \sum_{k_r=0}^{n_r}
   \binom{n_1}{k_1} \cdots \binom{n_r}{k_r} (-x)_{|\vec
   k|} \prod_{j=1}^r \frac{c_j^{n_j-k_j}}{(c_j-1)^{n_j}}
   (\beta+x)_{|\vec n|-|\vec k|}.
\end{equation}
These multiple Meixner polynomials are related to the multiple Charlier polynomials by the asymptotic relation
\[   \lim_{\beta \to \infty} M_{\vec{n}}^{(1)}(x;\beta,c_1=a_1/\beta,\cdots,c_r=a_r/\beta) = C_{\vec{n}}(x;a_1,\ldots,a_r),  \]
where the multiple Charlier polynomial $C_{\vec{n}}$ is the monic polynomial of degree $|\vec{n}|$ satisfying
\[   \sum_{k=0}^\infty C_{\vec{n}}(k) k^\ell \frac{a_j^k}{k!} = 0, \qquad \ell=0,1,\ldots,n_j-1, \ j=1,\ldots,r, \]
where $a_1,\ldots,a_r >0$ and $a_i \neq a_j$ whenever $i \neq j$ (see \cite{ACV}, \cite{Ismail}).
 
An explicit expression for the multiple Meixner polynomials of the second kind can be obtained
using the Rodrigues formula \cite{ACV, Ismail},
\begin{multline}  \label{Rodrigues 2}
     M_{\vec n}^{(2)}(x) = \left ( \frac{c}{c-1} \right )^{|\vec n|}
     \prod_{j=1}^r (\beta_j)_{n_j} \frac{\Gamma (x+1)}{c^x}  \\
    \times  \left( \prod_{j=1}^r  \frac {\Gamma (\beta_j)}{\Gamma (\beta_j +
     x)} \nabla^{n_j} \frac {\Gamma (\beta_j+n_j + x)}{\Gamma (\beta_j +
     n_j)} \right) \frac {c^x}{\Gamma (x+1)} .
  \end{multline}
Now we can  again use property  \eref{backward} to find
\begin{align*}
   M_{\vec n}^{(2)}(x) &=\prod_{k=1}^r (\beta_k)_{n_k}\sum_{k_1=0}^{n_1} \cdots \sum_{k_r=0}^{n_r}
   \binom{n_1}{k_1} \cdots \binom{n_r}{k_r} \frac {c^{|\vec n|-|\vec k|}}{(c-1)^{|\vec n|}} (-1)^{|\vec k|}\frac{\Gamma (x+1)}{\Gamma \Big((x+1)-|\vec n|
   \Big)}\\
   & \quad \times \frac {\Gamma (\beta_1)}{\Gamma (\beta_1+x)}   \frac {\Gamma (\beta_1+n_1+x-k_1)}{\Gamma (\beta_1+n_1)}
   \frac   {\Gamma (\beta_2)}{\Gamma (\beta_2+x-k_1)} \frac {\Gamma (\beta_2+n_2+x-k_1-k_2)}{\Gamma
   (\beta_2+n_2)} \\
   & \quad \times  \cdots \frac{\Gamma (\beta_r)}{\Gamma (\beta_r+x-k_1-k_2- \cdots
   -k_{r-1})} \frac {\Gamma (\beta_r+n_r+x-k_1- \cdots -k_r)}{\Gamma (\beta_r+
   n_r)}
\end{align*}
which gives an explicit expression for the multiple Meixner
polynomials of the second kind
\begin{equation} \label{explicit 2}
M_{\vec n}^{(2)}(x)= \sum_{k_1=0}^{n_1} \cdots \sum_{k_r=0}^{n_r}
   \binom{n_1}{k_1} \cdots \binom{n_r}{k_r} \frac {c^{|\vec n|-|\vec k|}}{(c-1)^{|\vec
   n|}} (-x)_{|\vec k|} \prod_{j=1}^r \left (
   \beta_j+x-\sum_{i=1}^{j-1}k_i \right )_{n_j-k_j}.
\end{equation}
These multiple Meixner polynomials are related to the multiple Charlier polynomials by the asymptotic relation
\[   \lim_{\beta \to \infty} M_{\vec{n}}^{(2)}(x;\beta_1=a_1\beta,\ldots,\beta_r=a_r\beta, c=1/\beta) = C_{\vec{n}}(x;a_1,\ldots,a_r).  \]

We will use the explicit expressions \eref{explicit 1} and \eref{explicit 2} for obtaining the generating functions in Section \ref{3.2}, 
for computing the recurrence coefficients in Section \ref{3.3}, and for proving the square summability in the appendix.

\section{Generating function}  \label{3.2}

\subsection{Multiple Meixner polynomials of the first kind}
Meixner polynomials have the generating function
\cite[Eq.~(6.1.8)]{Ismail}, \cite[Eq.~(9.10.11)]{Koekoek}
\begin{equation}  \label{GenMeix1}
\sum_{n=0}^\infty \tilde{M}_n(x;\beta,c) \frac{(\beta)_n}{n!} t^n =
\left(1-\frac{t}{c} \right)^x (1-t)^{-x-\beta}.
\end{equation}
For multiple Meixner polynomials of the first kind one has a
multivariate generating function (with $r$ variables). It was found in \cite[\S 9]{MTVZ} where it was proved using the Bargmann realization on the Lie algebra $W(r)$ made out of $r$ copies of the Heisenberg-Weyl algebra (see also Section \ref{oscillator}).

\begin{theorem}  \label{thm:gen1}
Multiple Meixner polynomials of the first kind have the following
(multivariate) generating function
\begin{multline}  \label{generating 1}
   \sum_{n_1=0}^\infty \cdots \sum_{n_r=0}^\infty  M_{\vec{n}}^{(1)}(x) \frac{t_1^{n_1}  \cdots
    t_r^{n_r}}{n_1!  \cdots n_r!}  \\
  = \left(1+\frac{t_1}{1-c_1}+\cdots+\frac{t_r}{1-c_r}\right)^x
  \left(1+ \frac{c_1}{1-c_1}t_1 +\cdots+\frac{c_r}{1-c_r}t_r\right)^{-x-\beta}.
\end{multline}
\end{theorem}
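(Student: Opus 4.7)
The plan is to derive the generating function directly from the explicit formula \eref{explicit 1} by rearranging the quintuple (or $(2r)$-fold) sum and recognizing the result as a product of two binomial series.

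First I would substitute \eref{explicit 1} into the left-hand side, yielding
\[
S = \sum_{\vec n \ge 0} \sum_{\vec k \le \vec n}\, \binom{n_1}{k_1}\cdots\binom{n_r}{k_r}\, (-x)_{|\vec k|}\,(\beta+x)_{|\vec n|-|\vec k|} \prod_{j=1}^r \frac{c_j^{n_j-k_j}}{(c_j-1)^{n_j}}\, \frac{t_j^{n_j}}{n_j!}.
\]
The next step is the key decoupling move: introduce $m_j=n_j-k_j$ and absorb the binomial coefficient via $\binom{n_j}{k_j}/n_j!=1/(k_j!\,m_j!)$, after which the constraint $\vec k\le\vec n$ disappears and $\vec k,\vec m$ range independently over $\mathbb{Z}_{\ge 0}^r$. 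Writing $c_j^{m_j}/(c_j-1)^{k_j+m_j}=(c_j-1)^{-k_j}\bigl(c_j/(c_j-1)\bigr)^{m_j}$, the sum factors as
\[
S = \Biggl(\sum_{\vec k\ge 0}(-x)_{|\vec k|}\prod_{j=1}^r \frac{1}{k_j!}\Bigl(\tfrac{t_j}{c_j-1}\Bigr)^{k_j}\Biggr) \cdot \Biggl(\sum_{\vec m\ge 0}(\beta+x)_{|\vec m|}\prod_{j=1}^r \frac{1}{m_j!}\Bigl(\tfrac{c_j t_j}{c_j-1}\Bigr)^{m_j}\Biggr).
\]

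Now I would apply the multivariate identity
\[
\sum_{\vec k\ge 0}(a)_{|\vec k|}\prod_{j=1}^r \frac{y_j^{k_j}}{k_j!} = \sum_{N=0}^\infty \frac{(a)_N}{N!}(y_1+\cdots+y_r)^N = (1-y_1-\cdots-y_r)^{-a},
\]
which follows by grouping terms with $|\vec k|=N$ via the multinomial theorem and then using the generalized binomial series. Applying this with $a=-x$ and $y_j=t_j/(c_j-1)$ gives the first factor $\bigl(1+\sum_j t_j/(1-c_j)\bigr)^{x}$, and with $a=\beta+x$ and $y_j=c_jt_j/(c_j-1)$ gives the second factor $\bigl(1+\sum_j c_jt_j/(1-c_j)\bigr)^{-\beta-x}$, which is exactly \eref{generating 1}.

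There is no conceptual difficulty beyond the index manipulation, so the main obstacle is purely bookkeeping: one must carry out the change of variables $m_j=n_j-k_j$ cleanly so that the Pochhammer symbols $(-x)_{|\vec k|}$ and $(\beta+x)_{|\vec m|}$ end up in separate factors, and one should note that the rearrangement is legitimate since the series converge absolutely for $|t_j|$ small enough, after which the identity extends to the formal power series level in $t_1,\ldots,t_r$.
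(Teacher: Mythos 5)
Your proposal is correct and follows essentially the same route as the paper's proof: substitute the explicit formula \eref{explicit 1}, interchange the order of summation, decouple via $m_j=n_j-k_j$, and evaluate each of the two resulting factors with the multivariate binomial identity (which is exactly Lemma \ref{lemma:Meix} with $x=-a$). The sign bookkeeping in your two applications of the identity checks out, and your closing remark about absolute convergence for small $|t_j|$ matches the convergence condition stated in that lemma.
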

We will use the following lemma, which is basically an alternative way to express the multinomial theorem \cite[Eq. (220) on p.~329]{SK}.
\begin{lemma} \label{lemma:Meix}
The generating function for the multinomial coefficients is
\begin{equation}  \label{lemmaMeix}
\sum_{n_1=0}^\infty \cdots \sum_{n_r=0}^\infty \frac{(-x)_{|\vec
n|}}{n_1! \cdots n_r!}s_1^{n_1} \cdots s_r^{n_r} = (1-s_1- \cdots
-s_r)^x.
\end{equation}
This series converges absolutely and uniformly for $|s_1|+\cdots+|s_r| < 1$ when $x \notin \mathbb{N}$ and
contains a finite number of terms if $x \in \mathbb{N}$.
\end{lemma}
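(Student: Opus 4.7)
The plan is to collect terms in the multi-sum by their total degree $n=|\vec n|$, apply the multinomial theorem to the resulting inner finite sum, and then recognize the surviving single series as Newton's binomial series. For any fixed $n$, the multinomial theorem gives
\[
\sum_{\substack{n_1,\ldots,n_r\ge 0\\ n_1+\cdots+n_r=n}} \frac{n!}{n_1!\cdots n_r!}\, s_1^{n_1}\cdots s_r^{n_r} = (s_1+\cdots+s_r)^n.
\]
Since the factor $(-x)_{|\vec n|}$ depends only on $|\vec n|$, grouping the terms in \eref{lemmaMeix} by total degree formally gives
\[
\sum_{n=0}^\infty \frac{(-x)_n}{n!}\,(s_1+\cdots+s_r)^n,
\]
which is exactly Newton's binomial expansion of $(1-s_1-\cdots-s_r)^x$, using $(-x)_n/n! = (-1)^n \binom{x}{n}$.

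The remaining task is to justify the rearrangement and identify the domain of validity. If $x\in\mathbb{N}$, then $(-x)_n=0$ for every $n>x$, so the series collapses to a polynomial of total degree $x$, and the identity reduces to the ordinary (finite) multinomial theorem applied to $(1-s_1-\cdots-s_r)^x$. If $x\notin\mathbb{N}$, I would repeat the grouping argument with each $s_j$ replaced by $|s_j|$, obtaining
\[
\sum_{\vec n}\left|\frac{(-x)_{|\vec n|}}{n_1!\cdots n_r!}\, s_1^{n_1}\cdots s_r^{n_r}\right|=\sum_{n=0}^\infty\frac{|(-x)_n|}{n!}\,(|s_1|+\cdots+|s_r|)^n,
\]
and the right-hand side is a binomial-type series that converges whenever $|s_1|+\cdots+|s_r|<1$. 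This absolute convergence legitimizes the rearrangement of the multi-indexed sum, and the Weierstrass M-test delivers uniform convergence on every set of the form $|s_1|+\cdots+|s_r|\le\rho<1$.

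The only mild obstacle is the bookkeeping needed to pass from the unordered $r$-fold sum to the single sum indexed by $n=|\vec n|$; once absolute convergence is secured the reordering is automatic and the identity is immediate. No deeper machinery is required: the proof is essentially the multinomial theorem composed with the binomial series, together with the standard convergence check for the Newton series.
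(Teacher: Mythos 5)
Your proof is correct and uses the same two ingredients as the paper's — the multinomial theorem to handle the inner sum over a fixed total degree and Newton's binomial series for the outer sum — merely run in the opposite direction (the paper expands $(1-s_1-\cdots-s_r)^x$ outward, you collapse the multi-sum inward). Your additional justification of the rearrangement via absolute convergence for $|s_1|+\cdots+|s_r|<1$ is a welcome detail that the paper's proof leaves implicit.
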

\begin{proof}
The use of the binomial theorem and the multinomial theorem leads to
\begin{align*}
   (1-s_1- \cdots -s_r)^x  &=\sum_{n=0}^{\infty}\binom {x}{n} (-1)^n
    (s_1+\cdots+ s_r)^n \\
   &=\sum_{n_1=0}^{\infty} \cdots \sum_{n_r=0}^{\infty} \binom {|\vec n|}{n_1, \cdots,
   n_r} s_1^{n_1} \cdots s_r^{n_r} (-1)^{|\vec n|} \binom {x}{|\vec n|} \\
   &=\sum_{n_1=0}^\infty \cdots \sum_{n_r=0}^{n_r}\frac{(-x)_{|\vec
n|}}{n_1! \cdots n_r!}s_1^{n_1} \cdots s_r^{n_r} .
\end{align*}
\end{proof}

\begin{proof}[Proof of Theorem \ref{thm:gen1}]
Using (\ref{explicit 1}) the multivariate generating function is
\begin{multline*}
\sum_{n_1=0}^{\infty}\cdots\sum_{n_r=0}^{\infty}\sum_{k_1=0}^{n_1}\cdots\sum_{k_r=0}^{n_r}
\frac{(-x)_{|\vec k|}}{k_1!(n_1-k_1)! \cdots k_r!(n_r-k_r)!} \\
\times \frac{c_1^{n_1-k_1} \cdots c_r^{n_r-k_r}}{(c_1-1)^{n_1} \cdots
(c_r-1)^{n_r}}   (\beta+x)_{|\vec n|-|\vec k|}t_1^{n_1} \cdots t_r^{n_r}  .
\end{multline*}
Changing the order of summation gives
\begin{multline*}
\sum_{k_1=0}^{\infty}\cdots\sum_{k_r=0}^{\infty}\sum_{n_1=k_1}^{\infty}\cdots\sum_{n_r=k_r}^{\infty}
\frac{(-x)_{|\vec k|}}{k_1!(n_1-k_1)! \cdots k_r!(n_r-k_r)!} \\
\times \frac{c_1^{n_1-k_1} \cdots c_r^{n_r-k_r}}{(c_1-1)^{n_1} \cdots
(c_r-1)^{n_r}}
 (\beta+x)_{|\vec n|-|\vec k|}t_1^{n_1} \cdots t_r^{n_r} ,
\end{multline*}
and by setting $\ell_i=n_i-k_i$ and putting the factors in $\ell_i$ and $k_i$ together
\begin{multline*}
 \left(\sum_{k_1=0}^{\infty}\cdots\sum_{k_r=0}^{\infty}\frac{(-x)_{|\vec k|}}{k_1! \cdots
k_r!} \bigg (\frac{t_1}{c_1-1}\bigg)^{k_1} \cdots
\bigg(\frac{t_r}{c_r-1}\bigg)^{k_r}\right)\\
\times\left(\sum_{l_1=0}^{\infty}\cdots\sum_{l_r=0}^{\infty}\frac{(\beta+x)_{|\vec
l|}}{l_1! \cdots l_r!}  \bigg (\frac{c_1}{c_1-1}t_1 \bigg )^{l_1}
\cdots \bigg(\frac{c_r}{c_r-1}t_r \bigg )^{l_r} \right) .
\end{multline*}
Now we use Lemma \ref{lemma:Meix} to obtain the desired result.
\end{proof}

\subsection{Multiple Meixner polynomials of the second kind}
For multiple Meixner polynomials of the second kind one has the following
multivariate generating function (with $r$ variables).

\begin{theorem}  \label{thm:gen2}
Multiple Meixner polynomials of the second kind have the following
(multivariate) generating function
\begin{multline}  \label{generating 2}
   \sum_{n_1=0}^\infty \cdots \sum_{n_r=0}^\infty  M_{\vec{n}}^{(2)}(x) \frac{t_1^{n_1}  \cdots
    t_r^{n_r}}{n_1!  \cdots n_r!} \\
    =\left ( 1-\frac{1}{c}
    \bigg[1-\prod_{j=1}^r \bigg (1+\frac{c}{1-c}t_j \bigg )\bigg]
    \right)^x \prod_{j=1}^r \left (1+\frac{c}{1-c}t_j
    \right )^{-x-\beta_j} .
\end{multline}
\end{theorem}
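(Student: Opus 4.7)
The plan is to follow the strategy used for Theorem \ref{thm:gen1}: insert the explicit formula \eref{explicit 2} into the left-hand side of \eref{generating 2}, switch the order of summation, and reduce everything to Lemma \ref{lemma:Meix}. The complication absent in the first-kind case is the triangular coupling $\bigl(\beta_j+x-\sum_{i<j}k_i\bigr)_{n_j-k_j}$, whose disentanglement by a telescoping argument is where the real work lies.

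In the first step I would substitute \eref{explicit 2}, write $\binom{n_j}{k_j}/n_j!=1/(k_j!(n_j-k_j)!)$, and introduce $\ell_j:=n_j-k_j$ so that the $2r$ indices $k_1,\ldots,k_r,\ell_1,\ldots,\ell_r$ range independently over $\mathbb{N}$. With the abbreviations $u_j:=1+\frac{c}{1-c}t_j=1-\frac{ct_j}{c-1}$ and $K_{j-1}:=\sum_{i<j}k_i$, the inner $\vec\ell$-summation factorises, and each factor evaluates by the binomial series to
\[\sum_{\ell_j=0}^{\infty}\frac{(\beta_j+x-K_{j-1})_{\ell_j}}{\ell_j!}\left(\frac{ct_j}{c-1}\right)^{\ell_j}=u_j^{-(\beta_j+x)+K_{j-1}}.\]
Splitting $u_j^{-(\beta_j+x)+K_{j-1}}=u_j^{-(\beta_j+x)}\cdot u_j^{K_{j-1}}$ pulls the prefactor $\prod_{j}u_j^{-x-\beta_j}$, which is precisely the second factor of \eref{generating 2}, outside the $\vec k$-sum.

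For the remaining sum I would swap the order of the two products to rewrite $\prod_{j=1}^{r}u_j^{K_{j-1}}=\prod_{i=1}^{r}(u_{i+1}\cdots u_r)^{k_i}$, and merge these factors with the prefactor $(t_i/(c-1))^{k_i}/k_i!$ that is already present. This puts the leftover sum into the exact shape
\[\sum_{k_1,\ldots,k_r=0}^{\infty}\frac{(-x)_{|\vec k|}}{k_1!\cdots k_r!}\,s_1^{k_1}\cdots s_r^{k_r},\qquad s_i:=\frac{t_i}{c-1}\prod_{j>i}u_j,\]
to which Lemma \ref{lemma:Meix} applies and yields $(1-s_1-\cdots-s_r)^{x}$.

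The concluding step is to simplify $s_1+\cdots+s_r$ into the form appearing in \eref{generating 2}. Using $t_j/(c-1)=(1-u_j)/c$ and writing $V_i:=\prod_{j>i}u_j$ (so $V_0=\prod_j u_j$ and $V_r=1$), the relation $u_iV_i=V_{i-1}$ gives $s_i=(V_i-V_{i-1})/c$, so the sum telescopes to $\sum_{i=1}^{r}s_i=(1-\prod_j u_j)/c$. Substituting this into $(1-\sum_i s_i)^x\prod_j u_j^{-x-\beta_j}$ recovers exactly the right-hand side of \eref{generating 2}. I expect the main obstacle to be precisely this bookkeeping: the triangular Pochhammer factors produce $u_j^{K_{j-1}}$ terms that do not fit Lemma \ref{lemma:Meix} at face value, and only the identity $(1-u_i)\prod_{j>i}u_j=\prod_{j\ge i+1}u_j-\prod_{j\ge i}u_j$ reveals that they collapse to the right closed form.
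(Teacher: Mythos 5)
Your proposal is correct and follows essentially the same route as the paper's proof: substitute \eref{explicit 2}, interchange the summations, evaluate the $\ell$-sums by the binomial series, and reduce the remaining $\vec k$-sum to Lemma \ref{lemma:Meix} with $s_i=\frac{t_i}{c-1}\prod_{j>i}u_j$. The only (cosmetic) difference is in the final simplification of $s_1+\cdots+s_r$, where your telescoping identity $s_i=(V_i-V_{i-1})/c$ replaces the paper's expansion into elementary symmetric polynomials of $at_1,\ldots,at_r$; both yield $\sum_i s_i=\bigl(1-\prod_j u_j\bigr)/c$ and hence the stated right-hand side.
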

\begin{proof}
If we denote the left hand side in \eref{generating 2} by G, then using \eref{explicit 2} the multivariate generating function is
\begin{multline*}
G = \sum_{n_1=0}^{\infty}\cdots\sum_{n_r=0}^{\infty}\sum_{k_1=0}^{n_1}\cdots\sum_{k_r=0}^{n_r}
\frac{(-x)_{|\vec k|}}{k_1!(n_1-k_1)! \cdots k_r!(n_r-k_r)!}
\frac{c^{|\vec n|-|\vec k|}}{(c-1)^{|\vec n|}}
(\beta_1+x)_{n_1-k_1}\\
 \times (\beta_2+x-k_1)_{n_2-k_2}(\beta_r+x-k_1- \cdots -k_{r-1})_{n_r-k_r}t_1^{n_1} \cdots
 t_r^{n_r} \cdot
\end{multline*}
Changing the order of summation gives
\begin{multline*}
G = \sum_{k_1=0}^{\infty}\cdots\sum_{k_r=0}^{\infty}\sum_{n_1=k_1}^{\infty}\cdots\sum_{n_r=k_r}^{\infty}
\frac{(-x)_{|\vec k|}}{k_1!(n_1-k_1)! \cdots k_r!(n_r-k_r)!}
\frac{c^{|\vec n|-|\vec k|}}{(c-1)^{|\vec n|}}
(\beta_1+x)_{n_1-k_1}\\
 \times (\beta_2+x-k_1)_{n_2-k_2}(\beta_r+x-k_1- \cdots -k_{r-1})_{n_r-k_r}t_1^{n_1} \cdots
 t_r^{n_r} ,
\end{multline*}
and by setting $\ell_i=n_i-k_i$ and putting the factors in $\ell_i$ and $k_i$ together
\begin{multline*}
 G =  \sum_{k_1=0}^{\infty}\cdots \sum_{k_r=0}^\infty \frac{(-x)_{|\vec{k}|}}{k_1!\cdots k_r!} \bigg(\frac{t_1}{c-1} \bigg)^{k_1}\cdots
  \bigg(\frac{t_r}{c-1} \bigg)^{k_r}
  \sum_{l_1=0}^{\infty}\frac{(\beta_1+x)_{\ell_1}}{\ell_1!}\bigg(\frac{c}{c-1}t_1
 \bigg)^{\ell_1}\\
 \times
 \sum_{\ell_2=0}^{\infty}\frac{(\beta_2+x-k_1)_{\ell_2}}{\ell_2!}\bigg(\frac{c}{c-1}t_2
\bigg)^{\ell_2} \cdots \sum_{\ell_r=0}^{\infty}\frac{(\beta_1+x-k_1-
\cdots -k_{r-1})_{\ell_r}}{\ell_r!}\bigg(\frac{c}{c-1}t_r
\bigg)^{\ell_r} .
\end{multline*}
For each sum involving the $\ell_i$ we can use the binomial theorem
\[  \sum_{\ell=0}^\infty \frac{(\beta)_\ell}{\ell!} t^\ell = (1-t)^{-\beta}, \qquad |t| < 1, \]
to find
\begin{multline*}
G = \sum_{k_1=0}^{\infty} \cdots \sum_{k_r=0}^\infty \frac{(-x)_{|\vec{k}}}{k_1!\cdots k_r!}
\left(\frac{t_1}{c-1}\bigg(1+\frac{c}{1-c}t_2 \bigg) \cdots
\bigg(1+\frac{c}{1-c}t_r \bigg)\right)^{k_1}  \\
\times \left( \frac{t_2}{c-1}\bigg(1+\frac{c}{1-c}t_3 \bigg) \cdots
\bigg(1+\frac{c}{1-c}t_r \bigg)\right )^{k_2} \cdots
    \left( \frac{t_{r-1}}{c-1}\bigg(1+\frac{c}{1-c}t_r \bigg) \right )^{k_{r-1}}
    \left( \frac{t_r}{c-1}\right )^{k_r}\\
\times \prod_{j=1}^r\left (1+\frac{c}{1-c}t_j \right)^{-x-\beta_j} .
\end{multline*}
If we let $a=c/(1-c)$ and
\[   s_j = t_j(1+at_{j+1})\cdots(1+at_t), \qquad 1 \leq j \leq r, \]
then this simplifies to
\[   G = \sum_{k_1=0}^\infty \cdots \sum_{k_r=0}^\infty \frac{(-x)_{|\vec{k}|}}{k_1!\cdots k_r!} s_1^{k_1}\cdots s_r^{k_r} 
  \left( \frac{-a}{c} \right)^{|\vec{k}|} \prod_{j=1}^r \left (1+\frac{c}{1-c}t_j     \right )^{-x-\beta_j}, \]
and by using Lemma \ref{lemma:Meix} we then get
\[   G = \left(1+ \frac{a}{c} (s_1+\cdots + s_r) \right)^x \prod_{j=1}^r \left (1+\frac{c}{1-c}t_j \right )^{-x-\beta_j}.  \]
The result then follows by observing that
\begin{align*}
& s_1+s_2+\cdots+s_r \\
& = t_1(1+at_2) \cdots(1+at_r)+t_2(1+at_3)\cdots(1+at_r)+ \cdots + t_{r-1}(1+at_r)+t_r  \\
& = t_1+t_2+\cdots+t_r+a\sum_{i<j}t_it_j+a^2\sum_{i<j<k}t_it_jt_k+\cdots+a^{r-1}t_1t_2 \cdots t_r \\
& =\frac{-1}{a}\bigg(1- \bigg[ 1 + a(t_1+ \cdots+t_r)+ a^2 \sum_{i<j}t_it_j + 
   a^3 \sum_{i<j<k}t_it_jt_k+ \cdots +a^r t_1\cdots t_r \bigg] \bigg)\\
& =\frac{-1}{a}\bigg( 1-(1+at_1)(1+at_2)\cdots(1+at_r) \bigg),
\end{align*}
which implies
\begin{equation*}
1+ \frac{a}{c} (s_1+s_2+\cdots+s_r) = 1-\frac{1}{c}
    \bigg[ 1-\prod_{j=1}^r \bigg (1+\frac{c}{1-c}t_j \bigg )\bigg] .
\end{equation*}
\end{proof}

\section{Recurrence relations} \label{3.3}
For multiple orthogonal polynomials
there are always nearest neighbor recurrence relations
 of the form
\begin{equation}  \label{eq:recur}
   xP_{\vec{n}}(x) = P_{\vec{n}+\vec{e_k}}(x) + b_{\vec{n},k} P_{\vec{n}}(x)
    + \sum_{j=1}^r a_{\vec{n},j} P_{\vec{n}-\vec{e}_j}(x),
\end{equation}
where $k\in \{1,\ldots,r\}$ \cite[Thm.~23.1.11]{Ismail}, \cite{WVA}, and
$\vec{e}_k = (0,\ldots,0,1,0,\ldots,0)$ is the $k$th unit vector in
$\mathbb{N}^r$.
 The recurrence relations for multiple Meixner polynomials (of the first and second kind)
were given in \cite{HVA} without proof. Here we will work out the
details of the computations.

\subsection{Multiple Meixner polynomials of the first kind}
\begin{theorem}  \label{thm:rec1}
The nearest neighbor recurrence relations for multiple Meixner
polynomials of the first kind are
\begin{multline}  \label{Meix1:recur1}
   x M_{\vec{n}}^{(1)}(x) = M_{\vec{n}+\vec{e}_k}^{(1)}(x) +\Bigg( (\beta + |\vec n|)\frac{c_k}{1-c_k}+
   \sum_{i=1}^r \frac{n_i}{1-c_i} \Bigg )M_{\vec{n}}^{(1)}(x) \\ 
  + \sum_{j=1}^r \frac{c_j n_j}{(1-c_j)^2}(\beta+|\vec n|-1) M_{\vec{n}-\vec{e}_j}^{(1)}(x).
\end{multline}
\end{theorem}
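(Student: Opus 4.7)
The plan is to exploit the generating function of Theorem~\ref{thm:gen1} and convert the claimed recurrence \eref{Meix1:recur1} into a single first-order linear PDE for
\[ F(t_1,\ldots,t_r;x) = \sum_{\vec n\ge 0} M_{\vec n}^{(1)}(x) \frac{t_1^{n_1}\cdots t_r^{n_r}}{n_1!\cdots n_r!}. \]
Multiplying both sides of \eref{Meix1:recur1} by $t_1^{n_1}\cdots t_r^{n_r}/(n_1!\cdots n_r!)$ and summing over $\vec n\in\mathbb N^r$, the natural translations apply: the shift $\vec n\mapsto \vec n+\vec e_k$ turns into the derivative $\partial_{t_k}$; multiplication by $n_i$ becomes $t_i\partial_{t_i}$, so the factor $|\vec n|$ becomes the Euler operator $E=\sum_i t_i\partial_{t_i}$; and in the $a_{\vec n,j}$ sum the shift $\vec n\mapsto\vec n-\vec e_j$ combined with the factor $n_j$ reindexes to simple multiplication by $t_j$.

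After these translations the statement becomes the identity
\[ xF = \partial_{t_k}F + \frac{c_k}{1-c_k}(\beta F + EF) + \sum_{i=1}^r \frac{t_i}{1-c_i}\,\partial_{t_i}F + C(t)\,(\beta F + EF), \]
where $C(t)=\sum_{j=1}^r c_j t_j/(1-c_j)^2$. I would verify this directly for the closed form $F = A^x B^{-x-\beta}$ of Theorem~\ref{thm:gen1}, writing $A = 1+\sum_i t_i/(1-c_i)$ and $B = 1+\sum_i c_i t_i/(1-c_i)$. Logarithmic differentiation gives
\[ \frac{\partial_{t_k}F}{F} = \frac{x}{(1-c_k)A} - \frac{(x+\beta)c_k}{(1-c_k)B}, \qquad \frac{EF}{F} = -\beta - \frac{x}{A} + \frac{x+\beta}{B}, \]
(the second using $\sum_i t_i/(1-c_i)=A-1$ and $\sum_i c_i t_i/(1-c_i)=B-1$) and, writing $\widetilde C(t)=\sum_i t_i/(1-c_i)^2$,
\[ \sum_{i=1}^r \frac{t_i}{1-c_i}\frac{\partial_{t_i}F}{F} = \frac{x\widetilde C}{A} - \frac{(x+\beta)C}{B}. \]

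Substituting these expressions into the PDE and dividing by $F$ yields a rational-function identity in $t_1,\ldots,t_r$ with $x,\beta$ as parameters. The $1/B$ contributions from $\partial_{t_k}F$, from $\tfrac{c_k}{1-c_k}EF$, and from $C\cdot EF$ cancel against the $\sum_i\tfrac{t_i}{1-c_i}\partial_{t_i}F$ piece; the three $\beta$-independent-of-$t$ pieces ($\beta\tfrac{c_k}{1-c_k}F$, $\tfrac{c_k}{1-c_k}\cdot(-\beta F)$, $\beta C F$, $C\cdot(-\beta F)$) cancel in pairs. What remains is $x\bigl[1 - (1+\widetilde C - C)/A\bigr]F$, and the single key observation $\widetilde C - C = \sum_i (1-c_i)t_i/(1-c_i)^2 = A-1$ gives $1+\widetilde C - C = A$, so this residual vanishes identically.

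The main obstacle is not any deep idea but the careful bookkeeping at the very first step: re-indexing the $a_{\vec n,j}$ sum so that the factor $n_j$ precisely absorbs the ratio $(\vec n-\vec e_j)!/\vec n!$ and produces the clean multiplication by $t_j$ that makes the PDE closed in $F$, $EF$, and $\partial_{t_k}F$. Once that is in place, the remainder is a short manipulation of rational functions of $t$, and the linearity of the identity in $x$ (both sides are polynomials of degree $1$ in $x$ after dividing by $F$) serves as a useful sanity check.
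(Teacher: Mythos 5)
Your proof is correct, but it follows a genuinely different route from the paper's. The paper starts from the general existence of nearest neighbor recurrences for multiple orthogonal polynomials, extracts the subleading coefficient $\delta_{\vec n}$ of $M_{\vec n}^{(1)}$ from the explicit sum \eref{explicit 1} to get $b_{\vec n,k}=\delta_{\vec n}-\delta_{\vec n+\vec e_k}$, and then determines $a_{\vec n,j}$ via the compatibility ratio $a_{\vec n,j}/a_{\vec n+\vec e_i,j}=(b_{\vec n-\vec e_j,j}-b_{\vec n-\vec e_j,i})/(b_{\vec n,j}-b_{\vec n,i})$ from \cite[Thm.~3.2]{WVA}, iterating down to the known $r=1$ Meixner coefficient $a_{n_j}^2=\frac{c_jn_j}{(1-c_j)^2}(\beta+n_j-1)$. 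You instead encode the entire recurrence as a single first-order PDE for the generating function and verify it on the closed form $A^xB^{-x-\beta}$ of Theorem~\ref{thm:gen1}; I checked the translation of each term (in particular that the $\vec n\mapsto\vec n-\vec e_j$ shift together with the factor $n_j$ gives clean multiplication by $t_j$, with the $n_j=0$ terms correctly dropping out) and the rational-function identity, including the key relation $\widetilde C-C=A-1$, and everything is sound. Your approach buys self-containedness: given Theorem~\ref{thm:gen1} it produces both the $b_{\vec n,k}$ and the $a_{\vec n,j}$ in one computation, without invoking the existence theorem or the compatibility relations of \cite{WVA}; the paper's approach avoids the generating function and explains structurally why the recurrence has this form. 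Two cosmetic remarks: the pairing of the $1/B$ cancellations is slightly different from how you narrate it (the $\partial_{t_k}F$ contribution cancels against the $\tfrac{c_k}{1-c_k}EF$ one, and the $C\cdot EF$ contribution against the $\sum_i\tfrac{t_i}{1-c_i}\partial_{t_i}F$ one), and you list four $\beta$-pieces while calling them three; neither affects the validity of the argument.
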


\begin{proof}
From (\ref{explicit 1}) and
\begin{multline*}
(-x)_{|\vec k|}(\beta +x)_{|\vec n|-|\vec k|}  \\ = (-1)^{|\vec k|}
x^{|\vec n|} + \Bigg ( (-1)^{|\vec k|-1} \binom {|\vec k|}{2}+
(-1)^{|\vec k|}\Big ( \beta + \frac {(2 \beta + |\vec n|-|\vec
k|)(|\vec n|-|\vec k|-1)}{2} \Big ) \Bigg ) x^{|\vec n|-1}+ \cdots
\end{multline*}
we find that  \[M_{\vec n}^{(1)}(x)= x^{|\vec n|}+ \delta _n
x^{|\vec n|-1}+ \cdots , \] where
\begin{multline*}
\delta_{\vec n}= \prod _{j=1}^r \Bigg (\frac {c_j}{c_j-1} \Bigg
)^{n_j} \sum_{k_1=0}^{n_1} \cdots \sum_{k_r=0}^{n_r} \binom{n_1}{k_1}
\cdots \binom {n_r}{k_r} \\ 
 \times \prod_{j=1}^r c_j^{-k_j}(-1)^{|\vec
k|}\Bigg (-\frac{|\vec k|^2}{2}+\frac {|\vec k|}{2}+\frac {2 \beta
+(2 \beta + |\vec n|-|\vec k|)(|\vec n|-|\vec k|-1)}{2} \Bigg ),
\end{multline*}
which leads, with some calculations using the binomial theorem, to
\[ \delta_{\vec n}= \frac {|\vec n|^2+|\vec n|(2 \beta -1)}{2}+(\beta+|\vec n|-1) \sum_{i=1}^r \frac{n_i}{c_i-1} .   \]
If we compare the coefficients of $x^{|\vec n|}$ in (\ref{eq:recur}),
then $ b_{\vec n, k}= \delta_{\vec n}- \delta_{\vec n+ \vec e_k}$,
which for the multiple Meixner polynomials of the first kind gives
\begin{equation}\label{Meix1:coef1}
  b_{\vec n,k}= (\beta +|\vec n|)\frac{c_k}{1-c_k}+ \sum_{i=1}^r
\frac{n_i}{1-c_i} .
\end{equation}
For the recurrence coefficients $a_{\vec n,j}$ we can use \cite[Thm.~3.2, Eq. (3.12)]{WVA}
\[ \frac{a_{\vec n,j}}{a_{\vec n+\vec e_i,j}} =\frac{b_{\vec n-\vec e_j,j}-b_{\vec n-\vec e_j,i}}{b_{\vec n,j}-b_{\vec
n,i}} . \] 
Using (\ref{Meix1:coef1}) we find that
\[  a_{\vec n,j}= \frac {\beta+|\vec n|-1}{\beta+|\vec n|-2} a_{\vec n-\vec e_i,j}.  \] 
If we use this formula recursively to go from $\vec n$ to $n_j \vec e_j$ by decreasing one of the indices (except the $j$th index)
by one, then we arrive at
\[ a_{\vec n,j}= \frac {\beta+|\vec n|-1}{\beta+n_j-1} a_{n_j \vec e_j, j}, \]
and $a_{n_j \vec e_j,j}$ is the recurrence coefficient $a_{n_j}^2$ for the Meixner polynomials $M_n(x;\beta,c_j)$,
which is equal to $\frac{c_j n_j}{(1-c_j)^2}(\beta+n_j-1)$. This gives
\[ a_{\vec n,j}= \frac{c_j n_j}{(1-c_j)^2} (\beta+|\vec n|-1) . \]
\end{proof}

\subsection{Multiple Meixner polynomials of the second kind}

\begin{theorem}  \label{thm:rec2}
The nearest neighbor recurrence relations for multiple Meixner polynomials of the second kind are
\begin{multline}  \label{Meix2:recur1}
  x M_{\vec{n}}^{(2)}(x) = M_{\vec{n}+\vec{e}_k}^{(2)}(x) +\Bigg( \frac{c}{1-c} (n_k+\beta_k)+
    \frac{|\vec n|}{1-c} \Bigg) M_{\vec{n}}^{(2)}(x) \\
    + \sum_{j=1}^r  \frac{cn_j(n_j+\beta_j-1)}{(1-c)^2} \prod_{i\neq j}^r \frac{n_j+\beta_j-\beta_i}{n_j+\beta_j-n_i-\beta_i} 
M_{\vec{n}-\vec{e}_j}^{(2)}(x).
\end{multline}
\end{theorem}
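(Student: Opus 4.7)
The plan is to mirror the proof of Theorem~\ref{thm:rec1}. First I would compute the subleading coefficient of $M_{\vec n}^{(2)}(x)$. Writing $M_{\vec n}^{(2)}(x) = x^{|\vec n|} + \delta_{\vec n}\, x^{|\vec n|-1}+\cdots$, I extract $\delta_{\vec n}$ from the explicit formula \eref{explicit 2} by expanding each $(-x)_{|\vec k|}$ and each rising factorial $(\beta_j + x - \sum_{i<j} k_i)_{n_j-k_j}$ to their top two powers of $x$. The monic normalization of $M_{\vec n}^{(2)}$ (coefficient of $x^{|\vec n|}$ equal to $1$) gives a first consistency check via the multinomial theorem. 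Then, comparing coefficients of $x^{|\vec n|}$ in \eref{eq:recur} yields $b_{\vec n,k} = \delta_{\vec n} - \delta_{\vec n+\vec e_k}$. After simplification using the binomial identity $\sum_{k=0}^n \binom{n}{k}(-1)^k k^m$ coordinate-wise (as in the first-kind proof), this should collapse to the stated expression $\frac{c}{1-c}(n_k+\beta_k) + \frac{|\vec n|}{1-c}$.

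For the off-diagonal coefficients $a_{\vec n,j}$, I would again invoke the ratio formula from \cite[Thm.~3.2, Eq.~(3.12)]{WVA},
\[
    \frac{a_{\vec n,j}}{a_{\vec n+\vec e_i, j}} = \frac{b_{\vec n-\vec e_j,j}-b_{\vec n-\vec e_j,i}}{b_{\vec n,j}-b_{\vec n,i}}.
\]
With the $b$'s just computed, the $|\vec n|/(1-c)$ pieces cancel in each difference and one obtains, for $i\neq j$,
\[
   \frac{a_{\vec n,j}}{a_{\vec n+\vec e_i, j}} = \frac{n_j+\beta_j - n_i - \beta_i - 1}{n_j+\beta_j - n_i - \beta_i}.
\]
Iterating this in the $i$-th coordinate from $m=0$ up to $m=n_i-1$, the product telescopes to a single factor $(n_j+\beta_j-\beta_i)/(n_j+\beta_j - n_i - \beta_i)$. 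The hypothesis $\beta_i - \beta_j \notin \mathbb{Z}$ keeps the denominators nonzero along the way. Applying this successively for every $i\neq j$ reduces $a_{\vec n,j}$ to $a_{n_j \vec e_j, j}$, which is the ordinary Meixner recurrence coefficient $a_{n_j}^2$ for $M_n(x;\beta_j,c)$, namely $\frac{c\,n_j(n_j+\beta_j-1)}{(1-c)^2}$. Multiplying the telescoped factors gives the product $\prod_{i\neq j} \frac{n_j+\beta_j-\beta_i}{n_j+\beta_j-n_i-\beta_i}$ appearing in \eref{Meix2:recur1}.

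The main obstacle is the bookkeeping for $\delta_{\vec n}$. Unlike the first-kind case, the Pochhammer factors in \eref{explicit 2} depend on partial sums $\sum_{i<j} k_i$, so the cross-terms between different $j$'s produce contributions involving products $k_{i_1}k_{i_2}$ as well as individual $k_i$ and $n_i$. The calculation is routine but delicate: one must group terms into pieces of the form $\sum_{\vec k}\prod\binom{n_i}{k_i}(-c)^{-|\vec k|} P(\vec k)$ and evaluate each via the binomial theorem, taking advantage of the fact that only $P$ of degree $\leq 2$ in the $k_i$ survive. Once $\delta_{\vec n}$ is in closed form, the identity $b_{\vec n,k} = \delta_{\vec n}-\delta_{\vec n+\vec e_k}$ and the telescoping for $a_{\vec n,j}$ are essentially mechanical.
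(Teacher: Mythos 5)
Your proposal is correct and follows essentially the same route as the paper: extract the subleading coefficient $\delta_{\vec n}$ from \eref{explicit 2}, obtain $b_{\vec n,k}=\delta_{\vec n}-\delta_{\vec n+\vec e_k}$, and then telescope the ratio formula of \cite[Thm.~3.2, Eq.~(3.12)]{WVA} down to the single-index Meixner coefficient $a_{n_j\vec e_j,j}=\frac{cn_j(n_j+\beta_j-1)}{(1-c)^2}$. The ratio $\frac{n_j+\beta_j-n_i-\beta_i-1}{n_j+\beta_j-n_i-\beta_i}$ and the telescoped factor $\frac{n_j+\beta_j-\beta_i}{n_j+\beta_j-n_i-\beta_i}$ both check out, and the bookkeeping you flag for $\delta_{\vec n}$ is exactly the computation the paper carries out via the identity $\sum_{j=2}^r k_j\sum_{i<j}k_i=\frac{|\vec k|^2}{2}-\frac12\sum_j k_j^2$ and the binomial theorem.
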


\begin{proof}
From (\ref{explicit 2}) and
\begin{multline*}
(-x)_{|\vec k|}\prod_{j=1}^r\bigg(\beta_j +x-\sum_{i=1}^{j-1}k_i\bigg )_{n_j-k_j}  \\
= (-1)^{|\vec k|} x^{|\vec n|} + (-1)^{|\vec k|}x^{|\vec n|-1}\bigg
( \frac{-|\vec k|^2}{2}+\frac{|\vec k|}{2}+\frac{1}{2}\sum_{j=1}^r
\bigg (2\beta_j-2 \sum_{i=1}^{j-1}k_i+n_j-k_j-1 \bigg
)(n_j-k_j)\bigg)+ \cdots,
\end{multline*}
we find that  \[M_{\vec n}^{(2)}(x)= x^{|\vec n|}+ \delta _{\vec{n}}
x^{|\vec n|-1}+ \cdots , \] 
where
\begin{multline*}
\delta_{\vec n}= \sum_{k_1=0}^{n_1} \cdots \sum_{k_r=0}^{n_r}
\binom{n_1}{k_1} \cdots \binom {n_r}{k_r}\prod _{j=1}^r \frac
{c^{n_j-k_j}}{(c-1)^{n_j}}\\
 \times (-1)^{|\vec k|}\bigg \{
\frac{-|\vec k|^2}{2}+\frac{|\vec k|}{2}+\frac{1}{2}\sum_{j=1}^r
\bigg (2\beta_j-2 \sum_{i=1}^{j-1}k_i+n_j-k_j-1 \bigg
)(n_j-k_j)\bigg \},
\end{multline*}
which leads, with some calculations using the formula
 \[ \sum _{j=2}^r \bigg( k_j \sum_{i=1}^{j-1}k_i \bigg )= \frac{|\vec k|^2}{2}-\frac{1}{2} \sum_{j=1}^r k_j^2, \] 
and the binomial theorem, to
\[ \delta_{\vec n}= \frac{1}{c-1}\sum_{j=1}^r n_j \bigg (n_j+\beta_j-1+
\sum_{i=1}^{j-1}n_i \bigg )+\sum_{j=1}^r \bigg
(\frac{n_j^2}{2}+\frac{n_j}{2}(2\beta_j-1) \bigg ) . \] 
If we compare the coefficients of $x^{|\vec n|}$ in (\ref{eq:recur}), then
$ b_{\vec n, k}= \delta_{\vec n}- \delta_{\vec n+ \vec e_k}$, which
for the multiple Meixner polynomials of the second kind gives
\begin{equation}\label{Meix2:coef2}
  b_{\vec n,k}= \frac{c}{1-c}(n_k+\beta_k)+ \frac{|\vec n|}{1-c} .
\end{equation}
For the recurrence coefficients $a_{\vec n,j}$ we can again use \cite[Thm.~3.2, Eq. (3.12)]{WVA}
\[ \frac{a_{\vec n,j}}{a_{\vec n+\vec e_i,j}} =\frac{b_{\vec n-\vec e_j,j}-b_{\vec n-\vec e_j,i}}{b_{\vec n,j}-b_{\vec
n,i}} . \] 
Using (\ref{Meix2:coef2}) we find that
\[  a_{\vec n,j}= \frac {n_j+\beta_j-n_i-\beta_i+1}{n_j+\beta_j-n_i-\beta_i} a_{\vec n-\vec e_i,j} . \] 
Using this formula recursively to go from $\vec{n}$ to $n_j \vec{e}_j$ by decreasing each of the indices (except the $j$th index) by one, we
find
\[ a_{\vec n,j}= \prod_{i=1,i\neq j}^r\frac{n_j+\beta_j-\beta_i}{n_j+\beta_j-n_i-\beta_i} \ a_{n_j\vec{e}_j,j}, \]
where $a_{n_j \vec e_j,j}$ is the recurrence coefficients $a_{n_j}^2$ for the Meixner polynomials $M_n(x;\beta_j,c)$, which is equal
to $\frac{c n_j}{(1-c)^2}(n_j+\beta_j-1)$. This gives
\[ a_{\vec n,j}= cn_j\frac{(n_j+\beta_j-1)}{(1-c)^2} \prod_{i\neq j}^r \frac{n_j+\beta_j-\beta_i}{n_j+\beta_j-n_i-\beta_i} . \]
\end{proof}

\section{Non-Hermitian oscillator Hamiltonians}  \label{oscillator}
In \cite{MVZ} Miki et al. showed how a set of $r$ non-Hermitian oscillator Hamiltonians can be obtained which are simultaneously diagonalizable,
with real spectra  and common eigenstates which are expressed in terms of multiple Charlier polynomials.
This idea was further extended in \cite{MTVZ} to construct $r$ non-Hermitian oscillator Hamiltonians in $r$ dimensions which are
simultaneously diagonalizable and for which the spectra are real and the common eigenstates are expressed in terms of multiple Meixner polynomials
of the first kind. Here we will construct $r$ non-Hermitian oscillator Hamiltonians for which the common eigenstates are expressed in terms
of multiple Meixner polynomials of the second kind.

The construction uses a Hilbert space $\mathcal{H}$ with an orthonormal basis $\{|n\rangle, n=1,2,3,\ldots\}$ such that
\[   \langle n|m \rangle = \delta_{n,m}, \]
and on this Hilbert space one has two operators $b$ and $b^+$ with the property
\[    \begin{cases}
    b|n\rangle = \sqrt{n} |n-1\rangle, & \textrm{annihilation operator}, \\
    b^+|n\rangle = \sqrt{n+1} |n+1\rangle, & \textrm{creation operator}.  
       \end{cases}  \]
The ground state $|0\rangle$ has the property $b|0\rangle =0$ and all the states $|n\rangle$ are obtained from the ground state by repeated
application of the operator $b^+$ in the sense that $|n\rangle = (b^+)^n|0\rangle /\sqrt{n!}$.
The annihilation and creation operators $b$ and $b^+$ satisfy the commutation relations
\[   [b,b]=[b^+,b^+]=0, \qquad [b,b^+]=1,  \]
and generate the Lie algebra $W$ with generators $\{b,b^+,1\}$. If we take $r$ copies $W_1,\ldots,W_r$ then we can construct
the algebra
\[  W(r) = \bigoplus_{i=1}^r W_i  \]
where the annihilating operators $b_1,\ldots,b_r$ are commuting and the creation operators $b_1^+,\ldots,b_r^+$ are also commuting, and
\[   [b_i,b_j^+] = \delta_{i,j}.  \]
We denote by $|n_1,\ldots,n_r\rangle$ the standard basis in the $r$-dimensional Hilbert space $\mathcal{H} \otimes \cdots \otimes \mathcal{H}$, 
and we have
\[   b_i|n_1,\ldots,n_i,\ldots,n_r\rangle = \sqrt{n_i} |n_1,\ldots,n_i-1,\ldots,n_r\rangle,  \]
\[   b_i^+|n_1,\ldots,n_i,\ldots,n_r\rangle = \sqrt{n_i+1} |n_1,\ldots,n_i+1,\ldots,n_r\rangle.  \]
Observe that
\[   b_i^+b_i|n_1,\ldots,n_i,\ldots,n_r\rangle = n_i |n_1\,\ldots,n_i,\ldots,n_r\rangle, \qquad 1 \leq i \leq r, \]
where $n_i \in \mathbb{N}$.

\subsection{Multiple Meixner polynomials of the first kind}  \label{MM1}
Miki et al. considered in \cite{MTVZ} the Hamiltonians
\[   H_i = b_i + \sum_{k=1}^r \frac{b_k^+b_k}{1-c_k} + \frac{c_i}{1-c_i} \left( \beta + \sum_{k=1}^r b_k^+b_k \right)
       + \left(\beta + \sum_{k=1}^r b_k^+b_k \right) \sum_{j=1}^r \frac{c_j}{(1-c_j)^2} b_j^+  \]
for $j=1,\ldots,r$,
and they showed that the states
\[   |x\rangle = N_x^{(1)} \sum_{n_1=0}^\infty \cdots \sum_{n_r=0}^\infty \frac{M_{\vec{n}}^{(1)}(x)}{\sqrt{n_1!\cdots n_r!}}
    |n_1,\ldots,n_r\rangle, \]
where $N_x^{(1)}$ are normalizing constants, are common eigenstates for the operators $H_1,\ldots,H_r$:
\[    H_i|x\rangle = x |x\rangle, \qquad 1 \leq i \leq r. \]
The normalizing constants are such that $\langle x|x\rangle = 1$, hence
\[   [N_x^{(1)}]^2 = \left( \sum_{n_1=0}^\infty \cdots \sum_{n_r=0}^\infty \frac{[M_{\vec{n}}^{(1)}(x)]^2}{n_1!\cdots n_r!} \right)^{-1}, \]
however Miki et al.\ did not show that the infinite sum is finite. In fact this sum diverges for every $x \in \mathbb{R}$, as we show in the appendix.
We propose to renormalize the standard basis $|n_1,\ldots,n_r\rangle$ and to take their norm to be
\[   \langle n_1,\ldots,n_r|n_1,\ldots,n_r\rangle = \frac{t_1^{n_1}\cdots t_r^{n_r}}{(\beta)_{|\vec{n}|}}, \]
where $(t_1,\ldots,t_r) \in \mathbb{R}_+^r$ satisfies
\[    \sum_{j=1}^r \frac{c_j^2}{(1-c_j)^2} t_j < 1, \]
and at least one of the $t_i$ $(1 \leq i \leq r)$ satisfies $t_i > (1-c_i)^2$. For instance, if $c_1+\cdots+c_r <1$ then one can take 
$t_j = (1-c_j)^2/c_j$ for all $j \in \{1,2,\ldots,r\}$.
Then the norm of the eigenstate in the weighted
$r$-dimensional Hilbert space is
\[   [\hat{N}_x^{(1)}]^{-1} = \left( \sum_{n_1=0}^\infty \cdots \sum_{n_r=0}^\infty \frac{[M_{\vec{n}}^{(1)}(x)]^2}{ n_1!\cdots n_r!(\beta)_{|\vec{n}|}} 
   t_1^{n_1} \cdots t_r^{n_r} \right)^{1/2}  \]
and this is finite if and only if $x \in \mathbb{N}=\{0,1,2,\ldots\}$ (see Theorem \ref{thm:App1} in the appendix).
This way there are countably many eigenstates for the eigenvalues $x \in \mathbb{N}$.

\subsection{Multiple Meixner polynomials of the second kind}   \label{MM2}
We will now construct Hamiltonians for which the common eigenstates are expressed in terms of multiple Meixner polynomials of the second kind.
This is a new construction but it follows similar ideas as in \cite{MTVZ}.
Let $0 < c < 1$ and $\beta_1,\ldots,\beta_r>0$ be such that $\beta_i-\beta_j \notin \mathbb{Z}$ whenever $i \neq j$. 
We first introduce $r$ operators $B_j$ by
\[  B_j = \prod_{\ell=1, \ell \neq j}^r (b_j^+b_j + \beta_j - b_\ell^+b_\ell-\beta_\ell), \qquad 1 \leq j \leq r, \]
which act on the basis $|n_1,\ldots,n_r\rangle$ as
\[    B_j |n_1,\ldots,n_r\rangle = \prod_{\ell=1, \ell \neq j}^r (n_j + \beta_j - n_\ell-\beta_\ell) |n_1,\ldots,n_r\rangle. \]
Hence the $|n_1,\ldots,n_r\rangle$ are eigenstates and the corresponding eigenvalues are nonzero because $\beta_j-\beta_\ell \notin \mathbb{Z}$ 
whenever $\ell \neq j$.
Hence $B_1,\ldots,B_r$ are invertible. We define the Hamiltonians 
\begin{multline}  \label{H2}
  H_i = b_i + \frac{c}{1-c} (b_i^+b_i+\beta_i) + \frac{1}{1-c} \sum_{j=1}^r b_j^+b_j \\
      + \frac{c}{(1-c)^2} \sum_{j=1}^r (b_j^+b_j+\beta_j) \prod_{k=1,k\neq j}^r (b_j^+b_j+\beta_j-\beta_k) B_j^{-1} b_j^+, \qquad 1 \leq i \leq r,
\end{multline}
and the states
\[   |x\rangle = N_x^{(2)} \sum_{n_1=0}^\infty \cdots \sum_{n_r=0}^\infty \frac{M_{\vec{n}}^{(2)}(x)}{\sqrt{n_1!\cdots n_r!}} |n_1,\ldots,n_r\rangle. \]
Note that each $H_i$ contains an annihilation operator $b_i$ and a number operator $b_i^+b_i$, but the remaining terms are independent of $i$
and involve the total number operator $\sum_{j=1}^r b_j^+b_j$ and all the creation operators $b_1^+,\ldots,b_r^+$. The interpretation is that there are $r$ different kinds of particles and $H_i$ only annihilates particles of the $i$th kind and creates particles of all kinds, independent of $i$ but
with a certain interaction which can be tuned using the parameters $\beta_1,\ldots,\beta_r$ and $c$.   
We normalize the standard basis such that
\[    \langle n_1,\ldots,n_r|n_1,\ldots,n_r\rangle = \frac{t_1^{n_1}\cdots t_r^{n_r}}{(\beta_1)_{n_1}\cdots(\beta_r)_{n_r}}, \]
where $(t_1,\ldots,t_r) \in \mathbb{R}_+^r$ satisfies
\[    t_j < \frac{(1-c)^2}{c^2}, \qquad \forall j \in \{1,2,\ldots,r\} \]
and at least one of the $t_i$ satisfies $t_i  > (1-c)^2$. One can take, for instance, $t_j=(1-c)^2/c$ for all $j \in \{1,2,\ldots,r\}$.
Then the normalizing constants satisfy
\[   [N_x^{(2)}]^2 = \left( \sum_{n_1=0}^\infty \cdots \sum_{n_r=0}^\infty \frac{[M_{\vec{n}}^{(2)}(x)]^2}
    {n_1!\cdots n_r!(\beta_1)_{n_1}\cdots(\beta_r)_{n_r}} t_1^{n_1}\cdots t_r^{n_r} \right)^{-1}.  \]
It is shown in the appendix (Theorem \ref{thm:App2}) that this multiple sum is finite if and only if $x \in \mathbb{N} = \{0,1,2,\ldots\}$, hence
this gives a countable number of eigenstates in the weighted $r$-dimensional Hilbert space.

\section{Further work}
We have given some properties of multiple Meixner polynomials and have shown how these polynomials can be used to construct non-Hermitian oscillator Hamiltonians in $r$ dimensions with a countable number of eigenvalues and eigenstates. 
Other families of multiple orthogonal polynomials, such as multiple Hahn polynomials or multiple Krawtchouk polynomials, may also
enable the construction of Hamiltonians for which the eigenstates can explicitly be given.

\ack 
We thank the referees for suggesting a number of improvements.
This research was supported by FWO grant G.0934.13, KU Leuven research grant OT/12/073 and the Belgian Interuniversity Attraction Pole P7/18.
F.N. is supported by a VLIR/UOS scholarship. This work was initiated in the Ph.D. dissertation of F.N. at KU Leuven and he
thanks the Department of Mathematics at KU Leuven for their hospitality.  

\appendix

\section*{Appendix}
\setcounter{section}{1}

If $(p_n)_{n\in \mathbb{N}}$ is a sequence of orthonormal polynomials on the real line for a positive measure
$\mu$ for which the moment problem is determinate,
\[   \int_{\mathbb{R}} p_n(x)p_m(x)\, d\mu(x) = \delta_{m,n}, \]
then it is well known that
\begin{equation}  \label{app1}
   \sum_{n=0}^\infty [p_n(x)]^2 = \frac{1}{\mu(\{x\})}, 
\end{equation}
so that this infinite sum is finite if and only if $x$ is a mass point of the measure $\mu$, i.e., $x$ belongs to the support of the discrete
part $\mu_d$ of the measure $\mu$ \cite[Theorem 2.5.6]{Ismail}. The orthogonality measure for Meixner polynomials is the negative binomial
distribution (or Pascal distribution) which is a discrete measure supported on the integers $\mathbb{N} = \{0,1,2,\ldots\}$. Following
\cite[\S 9.10]{Koekoek} the Meixner polynomials are given by
\begin{equation}  \label{app2}
   \tilde{M}_n(x;\beta,c) = {}_2F_1 \left( \begin{array}{c} -n,-x \\ \beta \end{array} ; 1-\frac{1}{c} \right), 
\end{equation}
and then the orthogonality for the Meixner polynomials is
\begin{equation}  \label{app3}
  (1-c)^\beta \sum_{k=0}^\infty \tilde{M}_n(k;\beta,c)\tilde{M}_m(k;\beta,c) \frac{(\beta)_k c^k}{k!} = \frac{n!}{c^n (\beta)_n} \delta_{m,n}.
\end{equation}
The orthonormal polynomials $p_n(x)$ and the monic polynomials $M_n(x)$ are given by
\[  p_n(x) = \tilde{M}_n(x;\beta,c) \sqrt{ \frac{(\beta)_n c^n}{n!}}, \quad
    M_n(x) = (\beta)_n \left( \frac{c}{c-1} \right)^n \tilde{M}_n(x);\beta,c), \]
so that \eqref{app1} becomes
\begin{equation} \label{app4}
    (1-c)^\beta \sum_{n=0}^\infty \frac{[M_n(x)]^2}{n! (\beta)_n} \frac{(c-1)^{2n}}{c^n} 
     = \begin{cases}  \frac{k!}{c^k (\beta)_k}, & x=k \in \mathbb{N} = \{0,1,2,\ldots \}, \\
                       + \infty, & x \in \mathbb{R} \setminus \mathbb{N}.  \end{cases}
\end{equation}
Observe that \eqref{app2} implies that $\tilde{M}_n(k;\beta,c) = \tilde{M}_k(n;\beta,c)$, hence the
orthogonality \eqref{app3} also gives
\begin{equation}  \label{app5}
  (1-c)^\beta \sum_{n=0}^\infty \frac{M_n(k)M_n(\ell)}{n!(\beta)_n} \frac{(c-1)^{2n}}{c^n} = \frac{k!}{c^k (\beta)_k} \delta_{k,\ell}, 
\end{equation}
for which the case $k=\ell$ corresponds to \eqref{app4}. In fact more is true: the generating function \eqref{GenMeix1} for the monic 
Meixner polynomials is
\[    \sum_{n=0}^\infty M_n(x) \frac{t^n}{n!} = \left( 1+ \frac{t}{1-c} \right)^x \left(1+ \frac{c}{1-c}t \right)^{-x-\beta}. \]
When $x \notin \mathbb{N}$ we see that the singularity closest to the origin is at $t=-(1-c)$, hence the radius of convergence is $R=1-c$.
This means that
\[   \limsup_{n \to \infty} \left( \frac{|M_n(x)|}{n!} \right)^{1/n} = \frac{1}{1-c}, \]
and hence
\[   \limsup_{n \to \infty} \left( \frac{|M_n(x)|^2}{n!(\beta)_n} \right)^{1/n} 
   = \limsup_{n \to \infty} \left( \frac{|M_n(x)|^2}{(n!)^2} \right)^{1/n} \left( \frac{n!}{(\beta)_n} \right)^{1/n} = \frac{1}{(1-c)^2}, \]
so that for $x \notin \mathbb{N}$
\begin{equation}  \label{app6}
   \sum_{n=0}^\infty  \frac{[M_n(x)]^2}{n! (\beta)_n} t^n  \quad \begin{cases}  < \infty & \textrm{if } 0 \leq t < (1-c)^2, \\
                                                                                  = \infty & \textrm{if } t > (1-c)^2. 
                                                                   \end{cases}
\end{equation}
The case $t=(1-c)^2/c > (1-c)^2$ corresponds to the divergent part of \eqref{app4}. When $x \in \mathbb{N}$ the first singularity in the generating function
is at $t=-(1-c)/c$, hence 
\[   \limsup_{n \to \infty} \left( \frac{|M_n(x)|}{n!} \right)^{1/n} = \frac{c}{1-c}, \]
and therefore
\[   \limsup_{n \to \infty} \left( \frac{|M_n(x)|^2}{n!(\beta)_n} \right)^{1/n} = \frac{c^2}{(1-c)^2}.  \]
This means that for $x \in \mathbb{N}$
\begin{equation}  \label{app7}
   \sum_{n=0}^\infty  \frac{[M_n(x)]^2}{n! (\beta)_n} t^n  \quad \begin{cases}  < \infty & \textrm{if } 0 \leq t < (1-c)^2/c^2, \\
                                                                                  = \infty & \textrm{if } t > (1-c)^2/c^2. 
                                                                   \end{cases}. 
\end{equation}
The case $t=(1-c)^2/c < (1-c)^2/c^2$ corresponds to the convergent part of \eqref{app4}.

We now prove two similar results for the multiple Meixner polynomials, which are needed in Section \ref{oscillator}

\begin{theorem} \label{thm:App1}
Let 
\[   A_1 = \{ (t_1,\ldots,t_r) \in \mathbb{R}_+^r : \left( \frac{c_1}{1-c_1} \right)^2 t_1 + \cdots + \left( \frac{c_r}{1-c_r} \right)^2 t_r < 1 \}, \]
and
\[   B_i = \{ (t_1,\ldots,t_r) \in \mathbb{R}_+^r : t_i > (1-c_i)^2 \}. \]
For multiple Meixner polynomials of the first kind one has for $(t_1,\ldots,t_r) \in A_1 \cap \bigcup_{i=1}^r B_i$
\[  \sum_{n_1=0}^\infty \cdots \sum_{n_r=0}^\infty \frac{[M_{\vec{n}}^{(1)}(x)]^2}{n_1!\cdots n_r! (\beta)_{|\vec{n}|}} 
     t_1^{n_1} \cdots t_r^{n_r}  \quad \begin{cases} < \infty, &  \textrm{if } x \in \mathbb{N} = \{0,1,2,\ldots\} ,\\
                                               = \infty & \textrm{if } x \in \mathbb{R} \setminus \mathbb{N}. 
                                 \end{cases} \] 
\end{theorem}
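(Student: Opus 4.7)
My approach would follow the single-variable strategy of the appendix, now applied to the multivariate generating function
\[ F_x(t_1,\ldots,t_r)=\Bigl(1+\sum_{j=1}^r \frac{t_j}{1-c_j}\Bigr)^x\Bigl(1+\sum_{j=1}^r \frac{c_j t_j}{1-c_j}\Bigr)^{-x-\beta} \]
from Theorem \ref{thm:gen1}. The argument splits into a divergence half (for $x\in\mathbb{R}\setminus\mathbb{N}$, handled on each $B_i$) and a convergence half (for $x\in\mathbb{N}$, handled on $A_1$).

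For the divergence half, I would use positivity of the summands to bound the multiple series from below by its restriction to $\vec{n}=n_i\vec{e}_i$:
\[ \sum_{n_i=0}^\infty \frac{[M_{n_i\vec{e}_i}^{(1)}(x)]^2\, t_i^{n_i}}{n_i!\,(\beta)_{n_i}}. \]
Setting $t_j=0$ for $j\neq i$ in $F_x$ and comparing with the single-variable generating function \eref{GenMeix1} identifies $M_{n_i\vec{e}_i}^{(1)}$ with the monic one-variable Meixner polynomial $M_{n_i}(\cdot;\beta,c_i)$, so the one-variable result \eref{app6} gives divergence for $x\notin\mathbb{N}$ as soon as $t_i>(1-c_i)^2$, which is exactly the condition defining $B_i$.

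For the convergence half, with $x=m\in\mathbb{N}$ the first factor of $F_m$ becomes a polynomial of degree $m$, so $F_m$ is holomorphic on the open set where $1+\sum_jc_jt_j/(1-c_j)\neq 0$. On any polytorus $\{|t_j|=R_j\}$ with $\sum_jc_jR_j/(1-c_j)<1$, Cauchy's inequality yields
\[ \frac{|M_{\vec{n}}^{(1)}(m)|}{n_1!\cdots n_r!}\le \frac{C_m(R)}{R_1^{n_1}\cdots R_r^{n_r}}, \qquad C_m(R)=\max_{|t_j|=R_j}|F_m(t)|<\infty, \]
and substituting the squared bound reduces the claim to
\[ \sum_{\vec{n}}\frac{n_1!\cdots n_r!}{(\beta)_{|\vec{n}|}}\prod_{j=1}^r u_j^{n_j}<\infty, \qquad u_j=\frac{t_j}{R_j^2}. \]
I would analyse this auxiliary series by grouping terms with $|\vec{n}|=N$ and applying Stirling's formula together with Laplace's method on the simplex $\{\alpha_j\ge 0,\ \sum_j\alpha_j=1\}$: the maximum of $\sum_j\alpha_j\log(\alpha_ju_j)$ is attained at $\alpha_j^\star\propto 1/u_j$ with value $-\log\sum_j(1/u_j)$, so the auxiliary series converges iff $\sum_jR_j^2/t_j>1$. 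Writing $v_j=c_jR_j/(1-c_j)$ and $b_j=c_j^2t_j/(1-c_j)^2$, the two constraints become $\sum_jv_j<1$ and $\sum_jv_j^2/b_j>1$; the hypothesis $A_1$ reads $\sum_jb_j<1$ and in particular forces each $b_j<1$, so concentrating $v_{j_0}\uparrow 1$ and $v_j\downarrow 0$ for $j\neq j_0$ gives $\sum_jv_j^2/b_j\to 1/b_{j_0}>1$, producing feasible radii.

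The main obstacle would be the Laplace analysis of the auxiliary series, since the Pochhammer $(\beta)_{|\vec{n}|}$ couples all indices through their sum and forces one to solve a simplex optimization to pin down the sharp exponential rate. Once that step is in place, the remaining work in the convergence half is just matching the derived condition $\sum_jR_j^2/t_j>1$ with the polytorus constraint $\sum_jc_jR_j/(1-c_j)<1$ via the substitution $v_j=c_jR_j/(1-c_j)$, which is exactly where the hypothesis $A_1$ enters; the divergence half then reduces immediately to the one-variable result \eref{app6}.
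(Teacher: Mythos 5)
Your divergence half is exactly the paper's argument (restrict to $\vec{n}=n_i\vec{e}_i$, identify with the one-variable monic Meixner polynomial, invoke \eref{app6}) and is fine. The convergence half, however, contains a genuine error in the Laplace analysis of the auxiliary series $\sum_{\vec{n}} \frac{n_1!\cdots n_r!}{(\beta)_{|\vec{n}|}}\prod_j u_j^{n_j}$. The function $\alpha\mapsto\sum_j\alpha_j\log(\alpha_ju_j)$ is strictly \emph{convex} on the simplex (the entropy part has Hessian $\mathrm{diag}(1/\alpha_j)$), so the interior critical point $\alpha_j^\star\propto 1/u_j$ is the \emph{minimum}, not the maximum; the maximum is attained at a vertex and equals $\max_j\log u_j$. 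Concretely, the single term $\vec{n}=N\vec{e}_j$ contributes $\frac{N!}{(\beta)_N}u_j^N\sim\Gamma(\beta)N^{1-\beta}u_j^N$, so the auxiliary series converges essentially if and only if $u_j<1$ for \emph{every} $j$, not if and only if $\sum_j 1/u_j>1$. For $r=2$ and $u_1=u_2=3/2$ your criterion predicts convergence ($\sum_j 1/u_j=4/3>1$), yet the subseries with $n_2=0$ already diverges.

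Even after correcting this, the approach does not reach all of $A_1$: you would need radii with $R_j>\sqrt{t_j}$ for all $j$ and $\sum_j c_jR_j/(1-c_j)<1$, i.e., $\sum_j c_j\sqrt{t_j}/(1-c_j)<1$, which by Cauchy--Schwarz is strictly stronger than the hypothesis $\sum_j c_j^2t_j/(1-c_j)^2<1$ once $r\geq 2$ (take $c_1=c_2$, $t_1=t_2$ with $c_j^2t_j/(1-c_j)^2=0.49$: this point lies in $A_1$ but no admissible polytorus exists). A single fixed polytorus Cauchy estimate is too lossy because it cannot capture the anisotropic decay of the coefficients in all directions simultaneously. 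The paper instead estimates $M^{(1)}_{\vec{n}}(k)$ directly from the explicit formula \eref{explicit 1}: for $x=k\in\mathbb{N}$ only the finitely many terms with $|\vec{k}|\leq k$ survive, and keeping the factor $(\beta)_{|\vec{n}|}$ explicit yields $|M_{\vec{n}}^{(1)}(k)|\leq k!\,(\beta)_{|\vec{n}|}P(n_1,\ldots,n_r)\prod_j \bigl(c_j/(1-c_j)\bigr)^{n_j}$ with $P$ a polynomial; the squared sum is then dominated by the multinomial series of Lemma \ref{lemma:Meix} with $s_j=c_j^2t_j/(1-c_j)^2$ (plus Abel's lemma for the polynomial factor), which converges precisely on $A_1$. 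To close the gap you would need either this direct estimate or a per-index optimization of the Cauchy radii.
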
    

\begin{proof}
Suppose that $x \in \mathbb{R} \setminus \mathbb{N}$ and $t_i > (1-c_i)^2$, then 
\[   \sum_{n_1=0}^\infty \cdots \sum_{n_r=0}^\infty \frac{[M_{\vec{n}}^{(1)}(x)]^2}{n_1!\cdots n_r! (\beta)_{|\vec{n}|}} 
     t_1^{n_1} \cdots t_r^{n_r} \geq \sum_{n_i=0}^\infty \frac{[M_{(0,\ldots,0,n_i,0,\ldots,0)}^{(1)}(x)]^2}{n_i! (\beta)_{n_i}} t_i^{n_i}, \]
and this single sum diverges since $M_{(0,\ldots,0,n_i,0,\ldots,0)}^{(1)}(x)$ is the monic Meixner polynomial $M_n(x;\beta,c_i)$, see \eqref{app6}.
This shows the divergent part of the theorem.

Suppose that $x \in \mathbb{N}$, say $x=k$, then the multiple sum \eqref{explicit 1} only contains a finite number of terms since
$(-k)_{|\vec{k}|} = 0$ whenever $|\vec{k}| > k$. This means that 
\[  M_{\vec{n}}^{(1)}(k) = \prod_{j=1}^r \frac{c_j^{n_j}}{(c_j-1)^{n_j}} 
 \sum_{|\vec{k}| \leq k} \binom{n_1}{k_1}\cdots \binom{n_r}{k_r} (-k)_{|\vec{k}|} (\beta+k)_{|\vec{n}|-|\vec{k}|}
    \prod_{j=1}^r c_j^{-k_j}.  \] 
A simple estimation, using $|(-k)_{|\vec{k}|}| \leq k!$ and $(\beta+k)_{|\vec{n}|-|\vec{k}|} = (\beta)_{|\vec{n}|}(\beta+|\vec{n}|)_{k-|\vec{k}|}/(\beta)_k$ gives
\begin{align*}
   |M_{\vec{n}}^{(1)}(k)| &\leq \prod_{j=1}^r \frac{c_j^{n_j}}{(1-c_j)^{n_j}} k! (\beta)_{|\vec{n}|} 
 \sum_{|\vec{k}| \leq k} \binom{n_1}{k_1}\cdots \binom{n_r}{k_r}  \frac{(\beta+|\vec{n}|)_{k-|\vec{k}|}}{(\beta)_k}
    \prod_{j=1}^r c_j^{-k_j} \\
   & =  \prod_{j=1}^r \frac{c_j^{n_j}}{(1-c_j)^{n_j}} k! (\beta)_{|\vec{n}|} P(n_1,\ldots,n_r),
\end{align*}
where $P(n_1,\ldots,n_r)$ is a polynomial in the variables $n_1,\ldots,n_r$. This means that
\begin{multline*}  
 \sum_{n_1=0}^\infty \cdots \sum_{n_r=0}^\infty \frac{[M_{\vec{n}}^{(1)}(k)]^2}{n_1!\cdots n_r! (\beta)_{|\vec{n}|}} 
     t_1^{n_1} \cdots t_r^{n_r} \\
  \leq  (k!)^2 \sum_{n_1=0}^\infty \cdots \sum_{n_r=0}^\infty [P(n_1,\ldots,n_r)]^2 
   \frac{c_1^{2n_1} \cdots c_r^{2n_r}}{(1-c_1)^{2n_1}\cdots(1-c_r)^{2n_r}} \frac{(\beta)_{|\vec{n}|}}{n_1!\cdots n_r!} t_1^{n_1} \cdots t_r^{n_r}.  
\end{multline*}
The latter sum converges for $(t_1,\ldots,t_r) \in A_1$ (see Lemma \ref{lemma:Meix} and Abel's lemma \cite[Prop.~2.3.4 on p.~87]{Krantz}). 
This proves the convergent part of the theorem.
\end{proof}

\begin{theorem}  \label{thm:App2}
Let 
\[   A_2 = \{ (t_1,\ldots,t_r) \in \mathbb{R}_+^r : t_i < \left( \frac{1-c}{c} \right)^2,\  1 \leq i \leq r\} \]
and 
\[   B_i = \{ (t_1,\ldots,t_r) \in \mathbb{R}_+^r : t_i > (1-c)^2 \}. \]
For multiple Meixner polynomials of the second kind one has for $(t_1,\ldots,t_r) \in A_2 \cap \bigcup_{i=1}^r B_i$
\[  \sum_{n_1=0}^\infty \cdots \sum_{n_r=0}^\infty \frac{[M_{\vec{n}}^{(2)}(x)]^2}{n_1!\cdots n_r!(\beta_1)_{n_1}\cdots (\beta_r)_{n_r}} 
     t_1^{n_1}\cdots t_r^{n_r} \quad 
  \begin{cases}   < \infty, & \textrm{if } x \in \mathbb{N} = \{0,1,2,\ldots\}, \\
                   = \infty, & \textrm{if } x \in \mathbb{R} \setminus \mathbb{N}.
   \end{cases}  \]
\end{theorem}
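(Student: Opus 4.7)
The plan is to mirror the two-step argument used for Theorem \ref{thm:App1}: establish divergence for $x \notin \mathbb{N}$ by restricting to a single coordinate axis, and establish convergence for $x \in \mathbb{N}$ by a termwise bound derived from the explicit formula \eqref{explicit 2}.

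For the divergent direction, I fix an index $i$ with $t_i > (1-c)^2$ (which exists by the hypothesis $(t_1,\ldots,t_r) \in \bigcup_i B_i$) and restrict the sum to multi-indices of the form $\vec n = n_i \vec e_i$. When all $n_j = 0$ for $j \neq i$, the orthogonality conditions defining $M_{\vec n}^{(2)}$ collapse to the classical single Meixner orthogonality with parameters $\beta_i, c$, so $M_{n_i \vec e_i}^{(2)}(x) = M_{n_i}(x;\beta_i,c)$ by uniqueness of monic orthogonal polynomials. Since all remaining terms in the multivariate sum are nonnegative,
\[
\sum_{\vec n} \frac{[M_{\vec n}^{(2)}(x)]^2}{n_1!\cdots n_r! (\beta_1)_{n_1}\cdots(\beta_r)_{n_r}} t_1^{n_1}\cdots t_r^{n_r} \geq \sum_{n_i=0}^\infty \frac{[M_{n_i}(x;\beta_i,c)]^2}{n_i!(\beta_i)_{n_i}} t_i^{n_i},
\]
and the single-variable sum on the right diverges by \eqref{app6} for $x \notin \mathbb{N}$ and $t_i > (1-c)^2$.

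For the convergent direction, set $x = k \in \mathbb{N}$ and write $s_j = k_1 + \cdots + k_j$. Since $(-k)_{|\vec k|} = 0$ whenever $|\vec k| > k$, the inner sum in \eqref{explicit 2} contains only the finitely many terms with $|\vec k| \leq k$. For each such $\vec k$, I will use the identity $(\alpha)_{p+q} = (\alpha)_p (\alpha+p)_q$ twice to rewrite
\[
(\beta_j + k - s_{j-1})_{n_j - k_j} = \frac{(\beta_j)_{n_j}\,(\beta_j + n_j)_{k - s_j}}{(\beta_j)_{k - s_{j-1}}},
\]
where $0 \leq k - s_j \leq k$ ensures $(\beta_j + n_j)_{k-s_j}$ is a polynomial in $n_j$ of degree at most $k$. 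Absorbing $|(-k)_{|\vec k|}| \leq k!$ and bounding $|c^{|\vec n|-|\vec k|}/(c-1)^{|\vec n|}|$ by $c^{-k}\prod_j c^{n_j}/(1-c)^{n_j}$ yields
\[
|M_{\vec n}^{(2)}(k)| \leq P_k(\vec n)\, \prod_{j=1}^r \frac{c^{n_j}}{(1-c)^{n_j}} (\beta_j)_{n_j}
\]
for some polynomial $P_k$ in $n_1,\ldots,n_r$ depending only on $k$. Dividing by $\prod_j n_j!(\beta_j)_{n_j}$ and multiplying by $\prod_j t_j^{n_j}$ reduces the convergence question to
\[
\sum_{n_1,\ldots,n_r=0}^\infty [P_k(\vec n)]^2 \prod_{j=1}^r \frac{(\beta_j)_{n_j}}{n_j!} \left( \frac{c^2 t_j}{(1-c)^2} \right)^{n_j}.
\]
Under the hypothesis $(t_1,\ldots,t_r) \in A_2$, each ratio $c^2 t_j/(1-c)^2$ is strictly less than $1$, so the standard binomial series $\sum_n (\beta_j)_n n^d x^n/n!$ converges for every fixed nonnegative integer $d$. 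A crude bound $[P_k(\vec n)]^2 \leq C \prod_j (1+n_j)^{2\deg P_k}$ then factorizes the multivariate series into a product of absolutely convergent single-variable series, which concludes the convergent direction.

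The main technical obstacle is the algebraic repackaging of the shifted Pochhammer $(\beta_j + k - s_{j-1})_{n_j - k_j}$ so that $(\beta_j)_{n_j}$ is cleanly separated from a polynomial-in-$n_j$ factor of uniformly bounded degree (independent of $\vec n$); once that identity is in place, the rest of the argument is a routine reduction to the single-variable Meixner asymptotic \eqref{app6} and the elementary convergence test for hypergeometric-type series.
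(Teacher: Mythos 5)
Your proposal is correct and follows essentially the same route as the paper's own proof: the divergence part restricts to the coordinate axis and invokes \eqref{app6}, and the convergence part uses the very same Pochhammer rearrangement $(\beta_j+k-s_{j-1})_{n_j-k_j}=(\beta_j)_{n_j}(\beta_j+n_j)_{k-s_j}/(\beta_j)_{k-s_{j-1}}$ to extract $(\beta_j)_{n_j}$ times a polynomial of bounded degree. The only cosmetic difference is that you finish the convergence step with an explicit polynomial bound and factorization into single-variable series, where the paper cites Abel's lemma; both are equally valid.
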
 

\begin{proof}
Suppose that $x \in \mathbb{R} \setminus \mathbb{N}$ and $t_i > (1-c)^2$, then 
\[   \sum_{n_1=0}^\infty \cdots \sum_{n_r=0}^\infty \frac{[M_{\vec{n}}^{(2)}(x)]^2}{n_1!\cdots n_r! (\beta_1)_{n_1}\cdots (\beta_r)_{n_r}} 
     t_1^{n_1} \cdots t_r^{n_r} \geq \sum_{n_i=0}^\infty \frac{[M_{(0,\ldots,0,n_i,0,\ldots,0)}^{(2)}(x)]^2}{n_i! (\beta_i)_{n_i}} t_i^{n_i}, \]
and this single sum diverges since $M_{(0,\ldots,0,n_i,0,\ldots,0)}^{(2)}(x)$ is the monic Meixner polynomial $M_n(x;\beta_i,c)$, see \eqref{app6}.
This shows the divergent part of the theorem.

Suppose that $x \in \mathbb{N}$, say $x=k$, then the multiple sum \eqref{explicit 2} only contains a finite number of terms since
$(-k)_{|\vec{k}|} = 0$ whenever $|\vec{k}| > k$. This means that 
\[  M_{\vec{n}}^{(2)}(k) =  \frac{c^{|\vec{n}|}}{(c-1)^{|\vec{n}|}} 
 \sum_{|\vec{k}| \leq k} \binom{n_1}{k_1}\cdots \binom{n_r}{k_r} (-k)_{|\vec{k}|} c^{-|\vec{k}|} 
   \prod_{j=1}^r \bigl(\beta_j+k-\sum_{i=1}^{j-1} k_i \bigr)_{n_j-k_j}  .  \] 
A simple estimation, using $|(-k)_{|\vec{k}|}| \leq k!$ and $(\beta_j+k-\ell)_{n_j-k_j} = (\beta_j)_{n_j}(\beta_j+n_j)_{k-k_j-\ell}/(\beta_j)_{k-\ell}$, gives
\begin{align*}
   |M_{\vec{n}}^{(2)}(k)| &\leq \frac{c^{|\vec{n}|}}{(1-c)^{|\vec{n}|}} k! \prod_{j=1}^r (\beta_j)_{n_j} 
 \sum_{|\vec{k}| \leq k} \binom{n_1}{k_1}\cdots \binom{n_r}{k_r}  c^{-|\vec{k}|} \prod_{j=1}^r 
  \frac{(\beta_j+n_j)_{k-\sum_{i=1}^j k_i}}{(\beta_j)_{k-\sum_{i=1}^{j-1}k_i}} \\
   & =  \frac{c^{|\vec{n}|}}{(1-c)^{|\vec{n}|}} k! \prod_{j=1}^r (\beta_j)_{n_j} Q(n_1,\ldots,n_r),
\end{align*}
where $Q(n_1,\ldots,n_r)$ is a polynomial in the variables $n_1,\ldots,n_r$. This means that
\begin{multline*}  
 \sum_{n_1=0}^\infty \cdots \sum_{n_r=0}^\infty \frac{[M_{\vec{n}}^{(2)}(k)]^2}{n_1!\cdots n_r! (\beta_1)_{n_1}\cdots (\beta_r)_{n_r}} 
     t_1^{n_1} \cdots t_r^{n_r} \\
  \leq  (k!)^2 \sum_{n_1=0}^\infty \cdots \sum_{n_r=0}^\infty [Q(n_1,\ldots,n_r)]^2 
   \frac{c^{2|\vec{n}|}}{(1-c)^{2|\vec{n}|}} \frac{(\beta_1)_{n_1}\cdots(\beta_r)_{n_r}}{n_1!\cdots n_r!} t_1^{n_1} \cdots t_r^{n_r}.  
\end{multline*}
The latter sum converges for $(t_1,\ldots,t_r) \in A_2$ since the multiple sum without the polynomial $Q$ factors into a product of
$r$ binomial series and the multiple sum with the polynomial $Q$ still converges by Abel's lemma \cite[Prop.~2.3.4 on p.~87]{Krantz}.  
This proves the convergent part of the theorem.
\end{proof}

Similar results are true for Charlier polynomials and multiple Charlier polynomials. These were considered in \cite{MVZ}.
The Charlier polynomials $C_n^{(a)}$ are orthogonal polynomials for the Poisson distribution
\[   e^{-a} \sum_{k=0}^\infty C_n^{(a)}(k)C_m^{(a)}(k) \frac{a^k}{k!} =  a^n n! \delta_{m,n},  \]
where $a>0$. They are given by
\[  C_n^{(a)}(x) = (-a)^n {}_2F_0 \left( \left. \begin{array}{c} -n,-x \\ - \end{array} \right| - \frac{1}{a} \right),  \]
from which we find that $C_n^{(a)}(k)/(-a)^n = C_k^{(a)}(n)/(-a)^k$. Hence the orthogonality also gives
\[     \sum_{k=0}^\infty C_k^{(a)}(n)C_k^{(a)}(m) \frac{1}{a^kk!} = e^a a^{-n} n! \delta_{m,n}. \]
The orthonormal Charlier polynomials are $p_n(x)= C_n^{(a)}(x)/\sqrt{a^n n!}$, so that the last sum for $n=m$ becomes
\[   \sum_{k=0}^\infty  [p_k(n)]^2 = e^{a} a^{-n} n!, \]
which indeed corresponds to \eqref{app1}. In fact more is true: the generating function for Charlier polynomials is
\cite[\S VI.1]{Chihara}
\[    \sum_{n=0}^\infty C_n^{(a)}(x) \frac{t^n}{n!} = (1+t)^x e^{-at}. \]
When $x \notin \mathbb{N}$ we see that this function has a singularity at $t=-1$, hence the radius of convergence is $R=1$, which implies that
\[  \limsup_{n \to \infty} \left( \frac{|C_n^{(a)}(x)|}{n!} \right)^{1/n} = 1. \]
But then
\[  \limsup_{n \to \infty} \left( \frac{|C_n^{(a)}(x)|^2}{n!} \right)^{1/n} 
    =  \limsup_{n\to \infty}  \left( \frac{|C_n^{(a)}(x)|^2}{(n!)^2} \right)^{1/n} (n!)^{1/n} = \infty , \]
so that for $x \notin \mathbb{N}$ 
\[   \sum_{n=0}^\infty \frac{[C_n^{(a)}(x)]^2}{n!} t^n = \infty   \]
for every $t>0$. When $x \in \mathbb{N}$ the generating function has no singularities and hence the radius of convergence is $\infty$.
One has 
\[   C_n^{(a)}(k) = (-a)^n \sum_{j=0}^k \binom{n}{j} (-k)_j a^{-j} \]
so that
\[   |C_n^{(a)}(k)| \leq a^n k! \sum_{j=0}^k \binom{n}{j} a^{-j} \leq  k! (a+1)^n, \]
giving
\[   \sum_{n=0}^\infty \frac{[C_n^{(a)}(k)]^2}{n!} t^n \leq (k!)^2 \sum_{n=0}^\infty \frac{(a+1)^{2n}}{n!} t^n = e^{(a+1)^2t} \]
so that this series converges for every $t >0$. Hence one has for every $t>0$
\[   \sum_{n=0}^\infty  \frac{[C_n^{(a)}(x)]^2}{n!} t^n \quad \begin{cases} < \infty, & \textrm{if } x \in \mathbb{N} = \{0,1,2,\ldots\}, \\
                                                                            = \infty, & \textrm{if } x \in \mathbb{R} \setminus \mathbb{N}.
                                          \end{cases}. \]   
Multiple Charlier polynomials are defined by
\[  C_{\vec{n}}(x) = (-1)^{|\vec{n}|} \sum_{k_1=0}^{n_1} \cdots \sum_{k_r=0}^{n_r} \binom{n_1}{k_1} \cdots \binom{n_r}{k_r} 
     (-x)_{k_1+\cdots+k_r} a_1^{n_1-k_1} \cdots a_r^{n_r-k_r}, \]
where $a_i >0$ for $1 \leq i \leq r$ and $a_i \neq a_j$ whenever $i \neq j$. 
The square summability for these multiple Charlier polynomials, needed for the normalization of the eigenfunctions in \cite{MVZ}, is given
by

\begin{theorem}  \label{thm:App3}
For multiple Charlier polynomials one has for all $(t_1,\ldots,t_r) \in \mathbb{R}_+^r$
\[   \sum_{n_1=0}^\infty \cdots \sum_{n_r=0}^\infty \frac{[C_{\vec{n}}(x)]^2}{n_1!\cdots n_r!} t_1^{n_1} \cdots t_r^{n_r} \quad 
    \begin{cases}  < \infty, & \textrm{if } x \in \mathbb{N} = \{0,1,2,\ldots\}, \\
                   = \infty, & \textrm{if } x \in \mathbb{R} \setminus \mathbb{N}.
     \end{cases}  \]
\end{theorem}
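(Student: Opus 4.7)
The plan is to follow the same two-step strategy used in the proofs of Theorems \ref{thm:App1} and \ref{thm:App2}: the divergent direction is reduced to the one-dimensional Charlier sum analyzed earlier in the appendix, while the convergent direction is handled by exploiting the truncation that occurs when $x$ is a nonnegative integer, combined with a crude polynomial estimate.

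For the divergent direction, suppose $x \in \mathbb{R}\setminus\mathbb{N}$ and fix any index $i$ with $t_i>0$. Since every term in the multiple sum is nonnegative, I would drop all summands with $n_j\neq 0$ for $j\neq i$ and obtain
\[ \sum_{n_1=0}^\infty\cdots\sum_{n_r=0}^\infty \frac{[C_{\vec n}(x)]^2}{n_1!\cdots n_r!}\, t_1^{n_1}\cdots t_r^{n_r}\ \geq\ \sum_{n_i=0}^\infty \frac{[C_{n_i\vec e_i}(x)]^2}{n_i!}\, t_i^{n_i}. \]
A direct inspection of the explicit formula for $C_{\vec n}$ given before the theorem shows $C_{n_i\vec e_i}(x)=C_{n_i}^{(a_i)}(x)$, the monic one-variable Charlier polynomial with parameter $a_i$. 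The right-hand side is therefore the single-variable Charlier sum, which was shown earlier in the appendix to diverge for every $t_i>0$ whenever $x\notin\mathbb{N}$.

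For the convergent direction, suppose $x=k\in\mathbb{N}$. The key observation is that $(-k)_{|\vec k|}=0$ as soon as $|\vec k|>k$, so the defining multiple sum for $C_{\vec n}(k)$ truncates to the finitely many multi-indices $\vec k$ with $|\vec k|\leq k$. Using the crude bound $|(-k)_{|\vec k|}|\leq k!$ one gets
\[ |C_{\vec n}(k)|\ \leq\ k!\, a_1^{n_1}\cdots a_r^{n_r}\, P(n_1,\ldots,n_r), \]
where $P(n_1,\ldots,n_r):=\sum_{|\vec k|\leq k}\binom{n_1}{k_1}\cdots\binom{n_r}{k_r}\,a_1^{-k_1}\cdots a_r^{-k_r}$ is a polynomial in $n_1,\ldots,n_r$ of total degree at most $k$ (because each $\binom{n_j}{k_j}$ is a polynomial of degree $k_j$ in $n_j$ and $k_1+\cdots+k_r\leq k$). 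Substituting this estimate yields
\[ \sum_{n_1,\ldots,n_r}\frac{[C_{\vec n}(k)]^2}{n_1!\cdots n_r!}\, t_1^{n_1}\cdots t_r^{n_r}\ \leq\ (k!)^2\sum_{n_1,\ldots,n_r}[P(n_1,\ldots,n_r)]^2\prod_{j=1}^r\frac{(a_j^2 t_j)^{n_j}}{n_j!}. \]
Without the polynomial factor this sum factorizes as $\prod_{j=1}^r e^{a_j^2 t_j}$, which is an entire function of the $t_j$. Multiplying a power series with infinite radius of convergence by a polynomial preserves convergence on the whole space, so the bound is finite for every $(t_1,\ldots,t_r)\in\mathbb{R}_+^r$; alternatively one can invoke Abel's lemma in several variables, as in the preceding two theorems.

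I do not expect a serious obstacle: the argument is structurally identical to those of Theorems \ref{thm:App1} and \ref{thm:App2}, and the only bookkeeping that requires some care is verifying that the sum over $\vec k$ with $|\vec k|\leq k$ indeed defines a polynomial in $n_1,\ldots,n_r$. The reason the conclusion of Theorem \ref{thm:App3} is cleaner than for the Meixner analogues—convergence on all of $\mathbb{R}_+^r$ rather than on a proper subset—is precisely that the dominating product $\prod_{j=1}^r e^{a_j^2 t_j}$ is entire, so no radius-of-convergence restriction of the form $t_j<(1-c)^2/c^2$ appears.
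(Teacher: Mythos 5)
Your proof is correct and follows exactly the route the paper intends: the paper itself only remarks that the proof of Theorem \ref{thm:App3} is ``similar (and in fact a bit easier) to the proofs before,'' and your two steps (restricting to one index to reduce the divergent case to the single-variable Charlier sum already analyzed in the appendix, and using the truncation $(-k)_{|\vec k|}=0$ for $|\vec k|>k$ together with the bound $|(-k)_{|\vec k|}|\leq k!$ to get a polynomial-times-exponential majorant in the convergent case) are precisely the template of Theorems \ref{thm:App1} and \ref{thm:App2}. Your closing observation that the dominating series $\prod_{j=1}^r e^{a_j^2 t_j}$ is entire, which is why no restriction on $(t_1,\ldots,t_r)$ appears, correctly explains why this case is ``a bit easier.''
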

The proof is similar (and in fact a bit easier) to the proofs before. The choice $t_1=\cdots=t_r=1$ shows that the eigenstates in \cite{MVZ}
are indeed those corresponding to the eigenvalues $x = k \in \mathbb{N}$.

\section*{References}

\end{document}